\newtheorem{thm}{Theorem}[section]
\newtheorem{lem}[thm]{Lemma}
\newtheorem{cor}[thm]{Corollary}
\newtheorem{prop}[thm]{Proposition}
\newtheorem{rem}[thm]{Remark}
\newcommand{\fw}{\text{\upshape{fw}}}
\title{Face-width of Pfaffian Braces and Polyhex Graphs on Surfaces}
\author{Dong Ye\thanks{Department of Mathematical Sciences, Middle
Tennessee State University, Murfreesboro, TN 37132, USA; email: dye@mtsu.edu} \
and Heping Zhang\thanks{School of Mathematics and Statistics, Lanzhou 
University, Lanzhou, Gansu 730000, China; email: zhanghp@lzu.edu.cn}}
\date{}
\begin{document}



\maketitle

\begin{abstract}
A graph $G$ is Pfaffian if it has an orientation such that each central cycle $C$ (i.e. $C$ is even and $G-V(C)$ has a perfect matching) has an odd number of edges directed in either direction of the cycle. The number of perfect matchings of Pfaffian graphs can be computed in polynomial time. In this paper, by applying the characterization of Pfaffian braces due to Robertson, Seymour and Thomas [Ann. Math. 150 (1999) 929-975], and independently
McCuaig [Electorn. J. Combin. 11 (2004) \#R79], we
show that every embedding of a Pfaffian brace on a surface with
positive genus has face-width at most 3.  For a Pfaffian cubic brace, we obtain further structure properties which are useful in characterizing Pfaffian 
polyhex graphs.
Combining with  polyhex graphs with face-width 2, we show that  a bipartite polyhex graph is Pfaffian if and only if it is isomorphic to the cube, the Heawood graph or $C_k\times K_2$ for even integers $k\ge 6$, and all non-bipartite polyhex graphs are Pfaffian.
\end{abstract}


\section{Introduction}

Let $G$ be a graph with vertex set $V(G)$ and edge set $E(G)$. A
{\em perfect matching} of $G$ is a set $M$ of independent edges such
that every vertex of $G$ is incident with exactly one edge in $M$. A
cycle of $G$ is {\em central} if $G-V(C)$ has a perfect matching. 
If $G$ has a perfect matching, a central cycle $C$ must be of even
size; In other words, $G$ has a perfect matching $M$ such that $C$ is $M$-alternating (i.e. the edges of $C$ alternate  on and off $M$). For an orientation $D$ to $G$, an
even cycle $C$ is {\it oddly oriented} if $C$ has an odd number of edges directed in either direction of the cycle. An orientation of $G$
is {\it Pfaffian} if every central cycle of $G$ is oddly oriented. A
graph $G$ is {\it Pfaffian} if it has a Pfaffian orientation. It is
known that if $G$ has a Pfaffian orientation $D$, then the number of perfect
matching of $G$ can be obtained by computing the
determinant of the skew adjacency matrix of $D$ \cite{LP}. The Pfaffian orientation
was first introduced by Kasteleyn \cite{K1} for solving
2-dimensional Ising problem.  It is important to determine whether a given graph is Pfaffian or not. Kasteleyn \cite{K1} showed that every planar graph admits a Pfaffian orientation. Little \cite{Little} obtained a characterization for Pfaffian bipartite graph, that  a bipartite graph is Pfaffian if only if it has no matching minor isomorphic to $K_{3,3}$. The problem characterizing Pfaffian
bipartite graph is related to many interesting problems,
such as the P\'olya permanent problem, the sign-nonsingular matrix
problem, etc. But the problem of
characterizing Pfaffian non-bipartite graphs remains open. Readers may refer to  a survey of Thomas
\cite{RT} on the Pfaffian
orientations of graphs.

A connected graph $G$ is {\em $k$-extendable} ($|V(G)|\ge 2k+2$) if $G$ has
$k$ independent edges and each set of $k$ independent edges $G$ is contained in  a perfect matching. A $k$-extendable graph is $(k-1)$-extendable and
$(k+1)$-connected \cite{P1,LP}. A 2-extendable bipartite graph is
also called a {\em brace}. So a brace is 3-connected. A 3-connected bicritical graph $G$
(the deletion of  any pair of distinct  vertices of $G$ results in  graph with a perfect
matching)  is called a {\em brick}.  Every bicritical graph is 1-extendable and non-bipartite. By
Lov\'asz's tight-cut decomposition \cite{L,LP}, every 1-extendable
graph can be reduced to a list of braces and bricks. Vazirani and
Yannakakis \cite{VY} showed that a graph $G$ is Pfaffian if and only
if all braces and bricks generated from tight-cut decomposition are Pfaffian. Robertson, Seymour and Thomas \cite{RST}, and independently
McCuaig \cite{WM} presented an elegant
construction of Pfaffian braces, which leads to a polynomial time algorithm to determine
whether a given braces is Pfaffian or not.

Given an embedding
$\Pi$ of a graph $G$ on a surface $\Sigma$,  a closed simple curve $\ell$
in $\Sigma$ is {\em
contractible} if $\Sigma-\ell$ has precisely two components and at
least one is homeomorphic to an open disk. The {\em face-width} (or
{\em representativity}) of  $\Pi$  is the maximum integer $k$ such that every non-contractible
simple closed curve in  $\Sigma$ intersects the graph at
least $k$ points (see \cite{MT}), denoted by $\text{fw}(G,\Pi)$. 
For convienence, assume that a plane graph has face-width infinity.  Robertson and Vitray
\cite{RV90}, and independently Thomassen \cite{T90}, showed that a
planar graph embedded on a surface $\Sigma$ with positive genus
has the face-width at most 2.

In this paper, we consider the face-width of Pfaffian braces on surfaces.  By using the tri-sum operations of Pfaffian braces in an extensive sense, in Section 2 we mainly show that Pfaffian brace embedded on a surface  with positive genus
has the face-width at most 3. It is natural to ask whether  Pfaffian bricks
have this property. In the end of this paper we give a negative answer to this question via non-bipartite polyhex graphs.

In Section 3, applying Robertson et al. and  McCuaig's constructions of Pfaffian braces \cite{RST,WM}, we obtained detailed structure properties of Pfaffian cubic 
braces, which are useful for characterizing of Pfaffian polyhex 
graphs.

A {\em polyhex graph} is a cubic graph cellularly embedded on a
surface such that every face is bounded by a hexagon. By Euler's
formula, the surface could be only the torus and the Klein bottle.
It is known that a bipartite polyhex graph is cubic brace \cite{YZ}. 
Polyhex graphs have been considered as surface tilings
\cite{Neg,T91}. A detailed classification of polyhex graphs was
given by Thomassen \cite{T91}. Polyhex graphs are also considered as
a possible generalization of  fullerenes  \cite{DFRR}  in chemistry and material science
\cite{KI,KMP}. P.E. John
\cite{J} tried using the Pfaffian method to enumerate the perfect
matchings of polyhex graphs on torus. However, not all polyhex
graphs on torus are Pfaffian.

In Section 4, we give the construction of polyhex graphs and determine which polyhex graphs have face-width 2. In Section 5, we completely characterize Pfaffian polyhex graphs. We show that a bipartite polyhex graph is Pfaffian if and only if it is planar or is isomorphic to the Heawood graph, and all non-bipartite polyhex graphs are Pfaffian.

\section{Face-width of Pfaffian braces on surfaces}

In this section, we mainly show that the face-width of Pfaffian braces on surfaces with positive genus is at most 3.

Let $G$ be a graph {\em cellularly} embedded on a surface $\Sigma$ (each face is homeomorphic to an open disk in the plane). For a face $f$
of $G$, the boundary (or $f$ itself ) is often represented by a closed walk of $G$,  denoted by $\partial f$, which is called {\em facial walk}.  An embedding $\Pi$ is a {\em strong embedding} if every facial walk is a cycle. The {\em face-width} 
$\fw(G,\Pi)$ of an embedding $\Pi$ is the smallest number $k$ such that there 
exist $k$ faces whose union contains a non-contractible curve.
Undefined notations
and concepts are referred to \cite{MT}.

The following result presents an important property of a planar
graph embedded on a surface $\Sigma$ with genus $g(\Sigma)\ge 1$.

\begin{thm}[\cite{RV90,T90}]\label{thm2-1}
Let $G$ be a planar 3-connected graph. Then every embedding of $G$
on a surface $\Sigma$ with $g(\Sigma)>0$ has face-width at most 2.
\end{thm}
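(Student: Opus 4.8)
The plan is to exploit the combinatorial rigidity of $3$-connected planar graphs together with the local planarity that large face-width enforces. By Whitney's theorem a $3$-connected planar graph $G$ has an essentially unique embedding in the sphere, and by Tutte's characterization its faces are exactly the \emph{peripheral cycles} of $G$, that is, the cycles that are both induced and non-separating; in particular each edge of $G$ lies on exactly two peripheral cycles, namely its two spherical faces, and on no others. I would argue by contradiction: suppose $G$ has an embedding $\Pi$ in a surface $\Sigma$ with $g(\Sigma)>0$ and $fw(G,\Pi)\ge 3$. Since $fw(G,\Pi)\ge 1$ the embedding is cellular and Euler's formula applies, and since $fw(G,\Pi)\ge 2$ every face of $\Pi$ is bounded by a cycle.

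The crux is the following claim, which I would isolate as a lemma: in any embedding of face-width at least $3$, every facial cycle is a peripheral cycle of $G$. To see that a facial cycle $C$ is induced, suppose it had a chord $uv$; joining $u$ to $v$ by an arc through the open disk bounded by $C$ and then back along the chord produces a simple closed curve meeting $G$ in only the two points $u,v$, so by $fw(G,\Pi)\ge 3$ it must be contractible, and analysing the disk it bounds (which meets $G$ only along one of the two $u$--$v$ arcs of $C$, the face itself being empty) contradicts either $3$-connectivity or the assumption that $uv$ is a genuine chord. To see that $C$ is non-separating, suppose $G-V(C)$ split into parts with no edge between them; routing a curve through the disk face of $C$ and then through the ``empty'' region separating the two parts yields a simple closed curve crossing $G$ in at most two vertices of $C$, which the separation forces to be non-contractible, again violating $fw(G,\Pi)\ge 3$. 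Both arguments are curve-surgery arguments built on the fact that each face is an open disk.

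Granting the lemma, the conclusion is a counting argument. Fix an edge $e$. In the cellular embedding $\Pi$ it lies on exactly two facial cycles $C_1,C_2$, which are distinct because $fw(G,\Pi)\ge 3$ and which are peripheral by the lemma. But $e$ lies on only two peripheral cycles in total, namely its two spherical faces; hence $\{C_1,C_2\}$ is exactly that pair. As $e$ was arbitrary, the facial cycles of $\Pi$ coincide as a set with the faces of the spherical embedding, so the two embeddings have the same number of faces $F$. Euler's formula then gives $\chi(\Sigma)=|V(G)|-|E(G)|+F=2$, forcing $g(\Sigma)=0$ and contradicting $g(\Sigma)>0$. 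Therefore $fw(G,\Pi)\le 2$. I expect the main obstacle to be the lemma, and within it the non-separating part, since turning a separation of $G-V(C)$ into a short non-contractible curve requires care in routing the curve through the face disk and verifying that it meets $G$ in at most two points; the remainder is bookkeeping with Euler's formula and the uniqueness of the planar embedding.
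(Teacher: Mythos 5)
The paper gives no proof of Theorem \ref{thm2-1}: it is imported verbatim from Robertson--Vitray and Thomassen, so there is no internal argument to compare against. Your proposal is, in outline, exactly the classical proof from those sources, and it is sound: face-width at least $3$ forces every facial walk to be a peripheral cycle, Tutte's theorem identifies the peripheral cycles of a $3$-connected planar graph with the facial cycles of its unique spherical embedding, and the edge-incidence count (each edge on exactly two facial cycles of $\Pi$, distinct because faces are bounded by cycles, and on exactly two peripheral cycles in total) forces the two face sets to coincide, whence $\chi(\Sigma)=|V|-|E|+F=2$, contradicting $g(\Sigma)>0$. Two small points of hygiene: the preliminary claims you invoke (face-width $\ge 1$ implies cellular on a positive-genus surface for connected $G$; face-width $\ge 2$ implies facial walks are cycles for $2$-connected $G$) are standard but do require $G$ connected, resp.\ $2$-connected, so say so; and in the chord case your hedge can be removed --- once $fw(G,\Pi)\ge 3$ makes the curve $\gamma$ through $u,v$ contractible, hence separating, both $u$--$v$ arcs of $C$ have internal vertices (as $uv$ is a chord, $u,v$ are nonadjacent on $C$), so $\{u,v\}$ is a $2$-cut, contradicting $3$-connectivity directly. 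The genuinely technical piece, as you correctly anticipate, is the nonseparating half of the lemma, where ``routing through the empty region separating the two parts'' is too vague as stated; the full surgery is carried out in Mohar and Thomassen, \emph{Graphs on Surfaces} (the proposition characterizing face-width $\ge 3$ for $3$-connected graphs by induced nonseparating facial cycles), and citing or reproducing that argument closes the only real gap in your plan.
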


Kasteleyn \cite{K2} showed that every planar graph is Pfaffian. But
the above result does not hold for all Pfaffian graphs. For example,
consider the Heawood graph that is a
Pfaffian brace (a Pfaffian orientation is shown in Figure \ref{fig2-1}). The Heawood graph
admits an embedding on the torus with face-width three (to be shown
in Section 4). 

\begin{figure}[!hbtp]\refstepcounter{figure}\label{fig2-1}
\begin{center}
\includegraphics{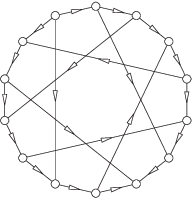}\\
{Figure \ref{fig2-1}: The Heawood graph admitted   a Pfaffian
orientation.}
\end{center}
\end{figure}

\begin{lem}[Prop. 5.5.12 on Page 150 in \cite{MT}]\label{lem:face-width3}
Let $G$ be a graph and $\Pi$ be an embedding of $G$ on a surface $\Sigma$. Then the following
conditions are equivalent:\\
(1) $\fw (G)\ge 3$ and $G$ is 3-connected;\\
(2) All facial walks of $\Pi$ are cycles and any two of them are either disjoint
or their intersection is just a vertex or an edge.
\end{lem}

\begin{lem}\label{heawood}\label{lem2-2}
Every embedding of the Heawood graph on a surface $\Sigma$ has
face-width at most 3.
\end{lem}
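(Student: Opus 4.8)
The plan is to rule out face-width $\geq 4$ for \emph{every} cellular embedding, the only embeddings that matter since the Heawood graph is non-planar and hence has Euler genus $k\geq 1$. First I would apply Euler's formula: with $|V|=14$ and $|E|=21$, a cellular embedding on a surface of Euler genus $k$ has $F=9-k$ faces. If $fw\leq 2$ there is nothing to prove, so assume $fw\geq 3$. Then the embedding is a closed $2$-cell embedding (every facial walk is a cycle, using that the Heawood graph is $3$-connected), and since its girth is $6$ every face is a cycle of length at least $6$. Summing boundary lengths gives $42=\sum_f|\partial f|\geq 6(9-k)$, i.e.\ $k\geq 2$; moreover $k=2$ forces equality, so every face is a hexagon and the embedding is a polyhex on the torus or the Klein bottle. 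This already pins down the tight case and shows that only $2\leq k\leq 7$ need be examined.

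Next I would reformulate ``$fw\leq 3$'' through the radial (vertex--face incidence) graph $R=R(G)$, using the standard fact that $fw$ equals half the length of a shortest non-contractible cycle of $R$. Thus $fw\leq 3$ is equivalent to $R$ having a non-contractible cycle through at most three vertices and three faces of $G$; concretely it suffices to produce either two faces sharing two vertices along a non-contractible radial quadrilateral ($fw\leq 2$), or three faces cyclically sharing vertices along a non-contractible radial hexagon ($fw\leq 3$). I would then assume for contradiction that $fw\geq 4$, so that $R$ has non-contractible edge-width at least $8$, while (from the first paragraph) all faces of $G$ are cycles of length at least $6$ and $k\geq 2$, whence $R$ has only $|V(G)|+|F(G)|=23-k\leq 21$ vertices and exactly $42$ edges.

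The core of the argument is to contradict this. My main route is a local-planarity estimate: non-contractible edge-width at least $8$ in $R$ forces every ball of radius $3$ in $R$ to be an embedded disk, i.e.\ to carry a planar piece of the embedding with no identifications. Because the Heawood graph has diameter $3$, distances in $R$ are correspondingly small, so one such disk already contains all $14$ vertices together with their incident faces; this exhibits the whole graph inside a disk and contradicts the non-planarity of the Heawood graph. As a fallback for the borderline genus, I would treat the finitely many $k=2$ polyhex embeddings by hand, using the explicit $6$-cycle structure of the Heawood graph (its hexagons correspond to configurations in the Fano plane) to locate three faces in the required cyclic co-facial pattern; and for $k\geq 3$, where $F=9-k\leq 6$, an incidence count on the $\geq 6$-regular dual forces two long faces to meet in two vertices non-contractibly, again yielding a short radial cycle.

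The main obstacle is making the passage from ``large non-contractible edge-width of $R$'' to an actual contradiction at the tight scale, since the relevant radius ($3$) coincides with the diameter of the Heawood graph, so the disk/planarity argument must be carried out carefully rather than invoked loosely. If one instead avoids the topological ball argument, the difficulty migrates into the finite but genuinely non-trivial classification of hexagonal tilings of the torus and Klein bottle by the Heawood graph. In either approach the delicate recurring point is \emph{certifying non-contractibility}: existence of a short radial cycle is not enough, and one must verify from the combinatorial co-facial data that the curve it traces is non-contractible on the underlying surface.
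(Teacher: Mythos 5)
Your core step has a genuine gap, and it sits exactly at the ``tight scale'' you flag at the end. The standard BFS-shortcut lemma does give what you want locally: if the radial graph $R$ has non-contractible edge-width at least $8$, then any non-contractible cycle of $R$ all of whose vertices lie within distance $r$ of a fixed center yields a non-contractible closed walk of length at most $2r+1$, so balls of radius $3$ in $R$ contain no non-contractible cycle of $R$. But your conclusion that ``one such disk already contains all 14 vertices together with their incident faces,'' exhibiting the whole graph in a disk, does not follow. Centered at a vertex $v$ of $G$, the $G$-vertices at $R$-distance at most $3$ are exactly those cofacial with $v$, and this can miss vertices (in the hexagonal torus embedding the three faces at $v$ cover only $13$ of the $14$ vertices). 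Centered at a face $f$, you may capture all $14$ vertices, but edges of $G$ are not edges of $R$: the non-contractible cycle of $R$ you need (the radial double of a non-contractible cycle of $G$, which exists because the embedding is cellular on a non-sphere surface) passes through face-vertices of $R$ that can be at $R$-distance $4$ from $f$, so the shortcut walks have length up to $3+4+1=8$ --- exactly the assumed edge-width, yielding no contradiction. So the local-planarity route, as proposed, closes with margin zero, i.e., does not close; you correctly sensed this (``must be carried out carefully rather than invoked loosely'') but you do not supply the missing argument.

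The fallbacks do not fill the hole either. For Euler genus $k=2$ you would have to actually classify the hexagonal embeddings of the Heawood graph on the torus and Klein bottle and check the face-width of each; that is genuinely the content of Section 3 of this paper (Theorem \ref{thm3-1}, Lemma \ref{lem3-6}), not a finite check one can wave at. For $k\ge 3$, your incidence count only produces two faces sharing two (or three) vertices, and --- as you yourself note --- certifying non-contractibility is the whole point: two faces sharing an edge share two vertices, yet the curve through them is contractible, so a raw pigeonhole bound must be supplemented by a 3-connectivity argument identifying which shared-vertex configurations force a non-contractible curve. By contrast, the paper's proof needs none of this machinery: after reducing to a strong embedding in which any two faces share at most one edge, it assumes $fw(G,\Pi)\ge 4$, takes four faces whose union carries a non-contractible curve with consecutive ones sharing single edges, and counts: each face is a cycle of length at least $6$ (girth), so if the two opposite pairs were disjoint the union would contain at least $4\cdot 6-8=16>14$ vertices; hence two opposite faces meet, and then three of the four faces already carry a non-contractible curve, giving $fw(G,\Pi)\le 3$. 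Your first-paragraph girth/Euler counting was pointed in precisely this direction; applying that count directly to the four faces, rather than detouring through radial edge-width estimates that are not tight enough, is what makes the proof go through.
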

\begin{proof}
Let $\Pi$ be an embedding of the Heawood graph $G$ on a surface
$\Sigma$. Suppose to the contrary that $\fw(G, \Pi)=k\ge 4$. 
Then $\Sigma$ has a non-contractible closed curve passing through faces  $f_0,f_1,f_2, \ldots, f_{k-1}$  of $\Pi$ in turn such that $k$ is minimum. Since
$G$ is 3-connected, 
every $\partial f_i$ is a cycle by Lemma~\ref{lem:face-width3}.
By minimality of $k$ and Lemma~\ref{lem:face-width3},  distinct faces $f_i$ and $f_{j}$ have an edge in common  if and only if $|j-i|\equiv 1 \pmod k$.
Since $G$ has the girth  six, $|V(\partial f_i)|\ge 6$. So
\[14=|V(G)| \ge|V(\bigcup_{i=0}^{k-1} \partial f_i)|\ge 6k-2k=4k\ge 16,\]
a contradiction. That implies 
 $k\le 3$.
\end{proof}

We will show that the above result holds for  all Pfaffian braces.

\begin{thm}\label{thm:2-3}
Let $G$ be a  Pfaffian brace. Then every embedding of $G$
on a surface $\Sigma$ with $g(\Sigma)>0$ has face-width at most 3.
\end{thm}

In order to prove Theorem \ref{thm:2-3}, we need the characterization
of Pfaffian braces obtained by Robertson, Thomas and Seymour
\cite{RST}, and independently by McCuaig \cite{WM}. Let $G_0$ be a
graph and $C$ a central 4-cycle of $G_0$; that means, $G_0-V(C)$ has a
perfect matching. Let $G_1$, $G_2$ and $G_3$ be three subgraphs of
$G_0$ such that $G_1\cup C_2\cup G_{3}=G_0$, and for distinct
$i,j\in \{1,2,3\}$,
$G_i\cap G_j=C$ and $V(G_{i})-V(C)\ne
\emptyset$.
A graph $G$ is a {\em tri-sum} of $G_1$, $G_2$ and $G_3$
if it is obtained from $G_0$ by deleting some edges (possibly none)
of $C$. For example, see Figure \ref{fig2-2}. 
 By a result of McCuaig (Lemma 19 on page 36 in  \cite{WM}), every
tri-sum of three braces is a new brace. 
The following is a variant of the statement of the characterization
of Pfaffian braces obtained in \cite{WM,RST}. 

\begin{figure}[!hbtp]\refstepcounter{figure}\label{fig2-2}
\begin{center}
\includegraphics{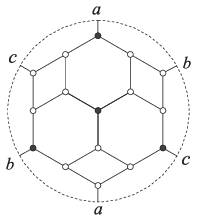}\\
{Figure \ref{fig2-2}: A cubic brace on the projective plane generated from three copies of
the cube by tri-sum.}
\end{center}
\end{figure}

\begin{thm} [\cite{RST,WM}, Theorem 4.2 in \cite{RT}] \label{thm:2-4}
A bipartite graph is a
Pfaffian brace if and only if it is isomorphic
to the Heawood graph, or it can be obtained from planar braces by
repeated application of the tri-sum operation.
\end{thm}

Let $G$ be a graph, and $G_1$ and $G_2$ be two subgraphs of $G$. 
Denote the set of edges joining vertices of $G_1$ and vertices of $G_2$
by $E(G_1,G_2)$. For a vertex $w$ of $G$, the set of all neighbors of $w$
in $G$ is denoted by $N(w)$.
The following technical lemma will be used to prove Theorem~\ref{thm:2-3}. 

\begin{lem}\label{lem:2-5}
Let $G$ be a 3-connected graph and $W$ be  a vertex-cut of size 4. If $G-W$ can be decomposed into
three disjoint graphs $G_1, G_2$ and $G_3$ such that for distinct $i, j\in \{1,2,3\}$
and $w\in W$,
$E(G_i, G_j)=\emptyset$ and $N(w)\cap V(G_i)\ne \emptyset$,
then every embedding of $G$ on a surface with positive genus has  face-width at most 3.
\end{lem}

\begin{proof} Let $\Pi$ be an embedding of $G$ on a surface $\Sigma$ with $g(\Sigma)>0$.
Suppose to the contrary that $\text{fw}(G,\Pi)>3$.

Let $W=\{v_1,v_2,v_3,v_4\}$ and $E(W)$ be the set of all edges with two
end-vertices in $W$.
For any $v_i\in W$, let $v_iv_{i,0}, ..., v_iv_{i,k_i}$ be the edges
incident with $v_i$ in clockwise direction in some small
neighborhood of $v_i$ homeomorphic to an open disc in the plane. Note that $N(v_i) \cap V(G_{\alpha})\ne \emptyset$ for every $\alpha\in \{1,2,3\}$. Let
$f_{i,\alpha}$ be a
face containing $v_iv_{i,\alpha_i}$ and $v_iv_{i,\alpha_i+1}$ such that $v_{i,\alpha_i}\in V(G_{\alpha})$ and
$v_{i,\alpha_i+1}\notin V(G_{\alpha})$.
 For an edge $v_iv_{i,k}\in E(W)$, let $f$ be the face containing $v_iv_{i,k}$ and $v_iv_{i,k+1}$.
By the definition of faces $f_{i,\alpha}$ and the ordering of 
edges incident with $v_i$, $f\ne f_{i,\alpha}$ for any $\alpha\in \{1,2,3\}$
because of $v_{i,k}\notin G-W$. 
So the intersection of $f_{i,\alpha}$ and $ f_{i,\beta}$ does not contain edges  of
$E(W)$ which are
incident with $v_i$ if $f_{i,\alpha}\ne f_{i,\beta}$. 

\medskip

(1) {\sl The intersection of two distinct faces $f_{i,\alpha}$  and $f_{i,\beta}$ does
not contain another vertex from $W$ different from $v_i$.}

\medskip

Note that $G$ is 3-connected and $\fw(G, \Pi)>4$. By Lemma~\ref{lem:face-width3},
the intersection of $f_{i,\alpha}$ and $f_{i,\beta}$ is either $v_i$ or an edge $v_iv_{i,k}$.
Since $f_{i,\alpha}$ and $f_{i,\beta}$ do not
contain an edge from $E(W)$ incident with $v_i$, it follows that $v_{i,k}\notin W$. So (1) holds.

\medskip

(2) {\sl Every $f_{i,\alpha}$ contains precisely two vertices of
$W$.}\medskip

First, we show that every $f_{i,\alpha}$ contains at least two
vertices of $W$.
By Lemma~\ref{lem:face-width3}, each $f_{i,\alpha}$ is 
bounded by a cycle. Note that
$v_{i,\alpha_i} \in V(G_{\alpha})$, $v_{i,\alpha_i+1}\notin
V(G_{\alpha})$.  If $v_{i,\alpha_i+1}\in W$, then $f_{i,\alpha}$ contains
two vertices from $W$. So suppose that $v_{i,\alpha_i+1}\notin W$. 
Hence, $\partial f_{i,\alpha}-W$ consists of at
least two components: one contains $v_{i,\alpha_i}$ and the other
contains $v_{i,\alpha_i+1}$. These two components are joined by
vertices from $W$. So
$f_{i,\alpha}$ contains at least
two vertices from $W$. 

Since $N(v_i)\cap V(G_{\alpha})\ne \emptyset$, there are
at least three distinct faces $f_{i,\alpha}$ ($\alpha\in \{1,2,3\}$) incident
with $v_i$. If one of them contains three vertices from $W$, then there
eixsts a pair of vertices which contained by two distinct faces $f_{i,\alpha}$
and $f_{i,\beta}$, contradicting (1). So (2) holds.
\medskip

By (1) and (2), we further have the following property:\medskip

{\sl Any two vertices of $W$ are contained by some face
$f_{i,\alpha}$, and every face $f_{i,\alpha}$ contains precisely a
pair vertices of $W$.}\medskip

Without loss of generality, assume that $f_{1,1}$ contains
$v_1$ and $v_2$, and $f_{1,2}$ contains $v_1$ and $v_4$, and
$f_{1,3}$ contains $v_1$ and $v_3$ (relabeling $G_{\alpha}$ for
$\alpha=1,2,3$ if necessary). Then, assume that $v_2$ and $v_4$ are
contained by $f_{2,\alpha}$, $v_2$ and $v_3$ are contained by
$f_{2,\beta}$ with $\beta\ne \alpha$,  $v_3$ and
$v_4$ are contained by $f_{3,\mu}$ for some $\mu\in \{1,2,3\}$. (For
example, see Figure \ref{fig2-3} (left). The shadow parts illustrate
the regions of $\Sigma$ containing vertices and edges from only one
of $G_{\alpha}-W$ ($\alpha\in \{1,2,3\}$), or one edge from
$E(W)$.) Then each of $f_{1,1}\cup f_{2,\beta}\cup f_{1,3}$,
$f_{1,1}\cup f_{2,\alpha}\cup f_{1,2}$, $f_{1,2}\cup f_{3,\mu}\cup
f_{1,3}$ and $f_{2,\alpha}\cup f_{2,\beta}\cup f_{3,\mu}$ contains a
closed curve which intersects $G$ at three vertices of $W$. Denote
these closed curves by $\ell_1$, $\ell_2$, $\ell_3$ and $\ell_4$.
\vspace{0.3cm}

\begin{figure}[!hbtp]\refstepcounter{figure}\label{fig2-3}
\begin{center}
\includegraphics{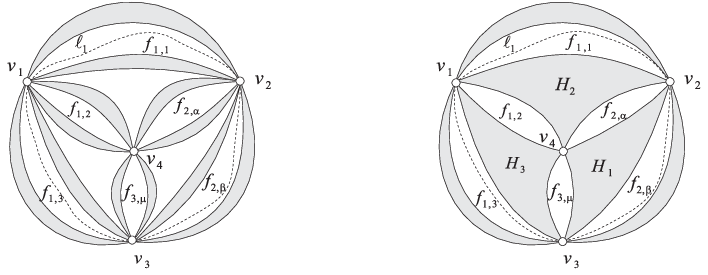}\\
{Figure \ref{fig2-3}: Illustration for the proof of Theorem
\ref{thm:2-3}.}
\end{center}
\end{figure}

(3) {\sl At least one of the closed curves $\ell_i$ $(i=1,...,4)$ is
non-contractible.} \vspace{0.3cm}

If not, suppose that all $\ell_i$ ($i=1,...,4)$ are contractible.
Then $\ell_i$ separates $\Sigma$ into two regions, and at least one
of them is homeomorphic to an open disc, denoted by $D_i$.

First,
suppose that $v_4$ lies on $D_1$. Let $R_1\subseteq D_1$ be the
region bounded by faces $f_{1,1}$, $f_{1,2}$ and $f_{2,\alpha}$.

By the definition of $f_{1,2}$, $R_1$ contains a vertex from $G_2$.
Let $H_2$ be the subgraph of $G$ (not including
$v_1, v_2$ and $v_4$) embedded in
$R_1$. By (1), every face of the embedding of $H_2$ inherited from 
the embdding $\Pi$ of $G$ in the region $R_1$ contains at most one vertex
from $v_1, v_2$ and $v_4$. So $H_2\subseteq G_2$.
Similarly, the subgraph $H_3$ (not including $v_1,v_3$ and $v_4$)
in the region $R_2\subseteq D_1$ bounded by faces $f_{1,2}$,
$f_{1,3}$ and $f_{3,\mu}$ is a subgraph of $G_{3}$. Further,
the subgraph $H_1$ (not including $v_2,v_3$ and $v_4$) in the region $R_3\subseteq D_1$ bounded by faces
$f_{2,\alpha}$, $f_{2,\beta}$ and $f_{3,\mu}$ is a subgraph
of $G_{1}$ (see Figure \ref{fig2-3} (right)).

Let $G'$ be the subgraph  (including $v_1, v_2$ and $v_3$) in $\Sigma-D_1$.
If $V(G')=\{v_1,v_2,v_3\}$, then $G$ is embedded in an open disc of $\Sigma$.
Hence $\text{fw}(G,\Pi)=0$ because $g(\Sigma)>0$.

So suppose that $G'$ contains at least one vertex
$w$ different from $v_1, v_2$ and $v_3$. Let
$H_{\alpha}'=G'\cap G_{\alpha}$. Then all vertices $v_1, v_2$ and $v_3$
have a neighbor in $H_{\alpha}'$ since $G$ is 3-connected. If two
of $H_1', H_2'$ and $H_3'$  are not empty,
then there exists a face $f_{1,\gamma}$
for some $\gamma\in \{1, 2,3\}$ which contains a vertex $v_{i,\gamma_i}\in
H_{\gamma}'$ and a vertex $v_{i,\gamma_i+1}\notin H_{\gamma}'$.
By (1), $f_{1,\gamma}$ contains two vertices from $W$. Then
there are two distinct faces, one $f_{1,\gamma}$ and another from
$f_{1,1}, f_{1,2}, f_{1,3}$ contains the same pair of vertices from $W$,
a contradiction.
So only one of $H_1', H_2'$ and $H_3'$ is not empty. Without
loss of generality, assume $H_1'\ne \emptyset$ and hence
$G'-W\subset G_1$.

Then $G_2=H_2$. Note that $N(v_3)\cap V(G_2)=\emptyset$, contradicting
that $N(v_i)\cap V(G_{\alpha})\ne \emptyset$ for any $v_i\in W$ and any
$\alpha\in \{1,2,3\}$. The contradiction implies that $v_4$ does not lie on $D_1$.

By symmetry, the region $D_i$ ($i\in \{1,2,3,4\}$) does not contain
the vertex of $W$ which is not on $\ell_i$. Then $\Sigma$ must be the
sphere since it is formed by pasting the four disc $D_i$ along
the four closed curves $\ell_i$, contradicting
$g(\Sigma)>0$.\medskip

 (3) implies that $\text{fw}(G,\Pi)\le 3$, contradicting $\fw (G, \Pi)>3$. The
contradiction completes the
proof. 
\end{proof}

A minimal vertex-cut $W$ of a graph $G$ is a {\em tri-cut} if $G-W$
has at least three components. For any vertex $w$ of a tri-cut $W$ and
any component $G_i$ of $G-W$, $N(w)\cap V(G_i)\ne \emptyset$. As a
direct corollary of Lemma~\ref{lem:2-5}, we have the following result.

\begin{thm}
Let $G$ be a 3-connected graph with a tri-cut of size 4. Then every embedding of $G$ on a surface
with positive genus has face-width at most 3.
\end{thm}

Now we are going to prove Theorem~\ref{thm:2-3}.\medskip

\noindent {\bf Proof of Theorem~\ref{thm:2-3}:} Let $G$ be a Pfaffian
brace. If $G$ is the Heawood graph or a planar brace, the result follows
from Lemma~\ref{lem2-2} and Theorem~\ref{thm2-1}.
Otherwise, by Theorem~\ref{thm:2-4}, $G$ is generated from planar Pfaffian braces by
applying tri-sum operations.
Assume that $G$ is generated from Pfaffian braces $G_1$, $G_2$ and $G_3$ by the tri-sum
operation along a central cycle $C=v_1v_2v_3v_4v_1$. Since each $G_i$ is
 3-connected, $N(v_j)\cap (V(G_i)-V(C))\ne \emptyset$ for every $v_j\in V(C)$.
Note that $G$ itself is a brace and hence 3-connected. So $G$ and $W=V(C)$ satisfy 
the conditions of Lemma~\ref{lem:2-5}. 
Hence every embedding of $G$ on a surface with positive genus has face-width at
most 3 by Lemma~\ref{lem:2-5}. \qed
\medskip

\section{Pfaffian cubic braces}

In order to characterize Pfaffian polyhex graphs, we need more structure properties
on Pfaffian cubic braces. The following result is a construction for Pfaffian
cubic braces which follows from Theorem~\ref{thm:2-4}.

\begin{thm} \label{thm2-6}
A bipartite cubic graph is a Pfaffian brace  if and only if it is
isomorphic to the Heawood graph, or it can be obtained from planar
cubic braces by repeated application of the tri-sum operation.
\end{thm}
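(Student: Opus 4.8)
The plan is to read Theorem \ref{thm2-6} off Theorem \ref{thm2-3}, the only genuinely new point being that for a \emph{cubic} brace the planar building blocks of the tri-sum decomposition can themselves be chosen cubic. The forward (``if'') implication is nearly immediate. If $G$ is the Heawood graph, it is a Pfaffian cubic brace: it is cubic and bipartite, cyclically $4$-edge-connected and hence a brace by Theorem \ref{thm2-5}, and it is Pfaffian by the orientation of Figure \ref{fig2-1}. If instead $G$ is built from planar cubic braces by repeated tri-sums, then since a planar cubic brace is in particular a planar brace, Theorem \ref{thm2-3} presents $G$ as a Pfaffian brace; what remains is only to check that it is in fact cubic, so that the construction stays inside the class of cubic braces. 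For this I would use a degree count: as every summand $G_i$ is cubic and contains the $4$-cycle $C$, each vertex of $C$ already spends two of its three edges on $C$, so the tri-sum can be cubic only when every edge of $C$ is deleted, which is exactly the case in which the vertices of $C$ form an independent set. The resulting graph is cubic and bipartite, and that it is a brace is part of the tri-sum framework behind Theorem \ref{thm2-3} (alternatively one may verify cyclic $4$-edge-connectivity and invoke Theorem \ref{thm2-5}).

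The substance is the reverse (``only if'') implication, which I would establish by induction on $|V(G)|$. Let $G$ be a Pfaffian cubic brace. If $G$ is planar it is already a planar cubic brace, and if $G$ is the Heawood graph we are done; so assume $G$ is non-planar and not the Heawood graph. By Theorem \ref{thm2-3}, $G$ is a tri-sum of braces $G_1,G_2,G_3$ along a central $4$-cycle $C$. The decisive local fact is the one recorded just before the theorem: since each vertex of $C$ has a neighbour in every $G_i-C$ and the vertices of $C$ form an independent set of $G$, each $G_i$ is itself a cubic brace.

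I would then feed each $G_i$ back into the induction. First, $|V(G_i)|<|V(G)|$, because $V(G)$ is the union of the three vertex sets, which meet pairwise exactly in $V(C)$, and each $G_j-C$ is nonempty, so discarding one summand still leaves the vertices supplied by the other two. Second, each $G_i$ is Pfaffian, since in the decomposition furnished by Theorem \ref{thm2-3} every constituent brace is itself obtained from planar braces by tri-sums and is therefore Pfaffian by the ``if'' part of that theorem. Third, $G_i$ is not the Heawood graph, as it contains the $4$-cycle $C$ while the Heawood graph has girth $6$. Hence each $G_i$ is a smaller Pfaffian cubic brace, so by the induction hypothesis it is obtained from planar cubic braces by tri-sums; reassembling $G$ from the $G_i$ exhibits $G$ in the same form, completing the induction. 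The chief obstacle I anticipate is bookkeeping rather than a new idea: one must make sure that descending into the summands never reintroduces the Heawood graph and that the leaves are genuinely \emph{planar cubic} braces. The two places where cubicity is used essentially---the degree count that forces all of $C$ to be deleted, and the fact that a tri-sum of cubic braces is again a (cubic) brace---are where I would be most careful.
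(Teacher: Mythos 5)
Your proposal is correct and takes essentially the same route as the paper: the paper derives this theorem from Theorem \ref{thm2-3} via exactly the observation you isolate---since $G$ is cubic and each vertex of the central $4$-cycle $C$ has a neighbour in every $G_i-C$, the vertices of $C$ form an independent set and each tri-sum summand $G_i$ is itself a cubic brace. The paper leaves the induction on $|V(G)|$ and the bookkeeping implicit (that each $G_i$ is smaller, Pfaffian, and not the Heawood graph since it contains the $4$-cycle $C$), so your write-up is just a more explicit rendering of the same argument.
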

\begin{proof}If $G$ is a non-planar Pfaffian cubic brace different from the Heawood graph,
then by Theorem \ref{thm:2-4}, $G$ is generated from planar braces by tri-sum
operations. We may assume that $G$ is generated from three 
Pfaffian braces $G_1$,
$G_2$ and $G_3$ along a 4-cycle $C$ by a tri-sum
operation. Since each $G_i$  is 3-connected and $G$ is cubic,  each
vertex of $C$ has precisely one neighbor in each $G_i-V(C$). 
So every $G_i$ is  cubic. The other direction follows directly 
from Theorem \ref{thm:2-4}.\end{proof}

In the following, we present some properties of Pfaffian cubic
braces which are useful in characterizing Pfaffian ployhex graphs.

A set $S$ of edges of a graph $G$ is a {\em cyclic edge-cut} if $G-S$ has
two components each of which contains a cycle. If every cyclic edge-cut of $G$ has
at least $k$ edges, $G$ is said to be {\em cyclically
k-edge-connected}. The {\em cyclic edge-connectivity} of $G$ is the
maximum integer $k$ such that $G$ is cyclically $k$-edge-connected,
denoted by $c\lambda(G)$.

\begin{thm}[\cite{HP,HP2}]\label{thm2-5}
Let $G$ be a cubic bipartite graph. Then $G$ is a brace if and only
if it is cyclically 4-edge-connected.
\end{thm}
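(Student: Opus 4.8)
The plan is to establish both directions, using as the main tool Plummer's neighbourhood characterization of $2$-extendable bipartite graphs: writing the bipartition of $G$ as $(A,B)$ with $|A|=|B|=n$, the graph $G$ is a brace if and only if it is connected, $n\ge 3$, and $|N(S)|\ge |S|+2$ for every $S\subseteq A$ with $1\le|S|\le n-2$ (and symmetrically for subsets of $B$). Since $G$ is cubic and bipartite it is regular, hence it always has a perfect matching, so Hall's condition $|N(S)|\ge|S|$ holds for every $S\subseteq A$; I will use this freely.

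For the forward direction (brace $\Rightarrow$ cyclically $4$-edge-connected) I argue by contraposition. A brace is $3$-connected, and for a cubic graph vertex- and edge-connectivity coincide, so $G$ is $3$-edge-connected; thus any cyclic edge-cut has at least three edges and it suffices to rule out a cyclic $3$-edge-cut $F$. Splitting $G-F$ into its two sides $G_1,G_2$ with $A_i=A\cap V(G_i)$ and $B_i=B\cap V(G_i)$, a degree count gives $3(|A_1|-|B_1|)=s_A-s_B$, where $s_A,s_B$ count the edges of $F$ leaving $A_1$ and $B_1$ respectively; together with $s_A+s_B=3$ this forces $\{s_A,s_B\}=\{0,3\}$, so all three edges of $F$ emanate from one side, say $A_1$, and $|A_1|=|B_1|+1$. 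A second count then shows that every perfect matching of $G$ saturates $B_1$ inside $G_1$ and hence uses exactly one edge of $F$. Finally, since $G$ is bipartite and triangle-free, three pairwise-intersecting edges would form a star, leaving their common vertex isolated in $G-F$ and contradicting that $G-F$ has exactly two components, both containing a cycle; hence two edges $e,e'$ of $F$ are independent. No perfect matching contains both (it would use two edges of $F$), contradicting $2$-extendability.

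For the converse (cyclically $4$-edge-connected $\Rightarrow$ brace), suppose the neighbourhood condition fails: there is $S\subseteq A$ with $2\le|S|\le n-2$ and $T:=N(S)$ satisfying $|T|\le|S|+1$; here $|S|\ge2$ because a single vertex already has $|N(S)|=3$, and $|T|\ge|S|$ by Hall. I claim $\partial X$ with $X=S\cup T$ is a cyclic edge-cut of size at most $3$. All $3|S|$ edges incident with $S$ land in $T$, so they are internal to $X$, and the only edges leaving $X$ issue from $T$; their number is $3|T|-3|S|\le 3$. That $X$ contains a cycle is immediate from $|E(X)|=3|S|\ge|S|+|T|=|V(X)|$. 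For the complement, every vertex of $B\setminus T$ keeps all three of its neighbours in $A\setminus S$, so writing $q=|B\setminus T|\ge1$ and $r=|A\setminus S|=n-|S|\ge2$ we have $|E(G-X)|=3q$ and $|V(G-X)|=q+r$; if $G-X$ were a forest then $3q\le q+r-1$, i.e. $2q\le r-1$, which together with $q\ge r-1$ (from $|T|\le|S|+1$) forces $q\le0$, a contradiction. Hence both sides contain a cycle and $\partial X$ is a cyclic edge-cut with at most three edges, contradicting cyclic $4$-edge-connectivity.

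The main obstacle, and the step I would treat most carefully, is the boundary behaviour in the converse: a raw Hall violation only bounds the relevant edge-cut by a constant, and a naive edge-versus-vertex count does not by itself guarantee a cycle on the complementary side when $|S|$ is close to $n-2$. The decisive observation is that the vertices of $B\setminus T$ retain their full degree $3$ in $G-X$, which forces enough edges there to prevent $G-X$ from being acyclic; pinning down this inequality (and confirming $r\ge 2$) is what makes the produced cut genuinely cyclic rather than merely small. I would also double-check the two degree counts in the forward direction and the triangle-free ``star'' argument, since these are precisely where the cubic and bipartite hypotheses are used.
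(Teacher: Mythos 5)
The paper offers no proof of this statement at all: Theorem \ref{thm2-5} is imported from Holton and Plummer \cite{HP,HP2}, so there is no in-paper argument to compare yours against, and your proposal should be judged as a self-contained proof. Judged that way, it is correct. Your one black box, Plummer's neighbourhood criterion for $2$-extendability of bipartite graphs, is legitimate (it is \cite{P2} in this paper's bibliography), and both counts check out: in the forward direction the parity argument $3(|A_1|-|B_1|)=s_A-s_B$ with $s_A+s_B=3$ does force $\{s_A,s_B\}=\{0,3\}$, hence $|A_1|=|B_1|+1$ and exactly one edge of $F$ in every perfect matching, and the bipartite ``star'' case is correctly excluded because the common vertex would be an acyclic component of $G-F$; in the converse, $|E(G[X])|=3|S|$, $|\partial X|=3|T|-3|S|\le 3$, and the decisive point that $B\setminus T$ keeps full degree $3$, so a forest would need $3q\le q+r-1$ against $q\ge r-1\ge 1$, is exactly right. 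Two small points deserve a line in a careful write-up. First, you tacitly assume $G$ connected (needed both for Plummer's criterion and to rule out $\partial X=\emptyset$; note that $|T|=|S|$ would give $\partial X=\emptyset$, so connectivity actually pins down $|T|=|S|+1$), and a Hall-type violation could occur for $S\subseteq B$ rather than $A$ --- the identical argument applies by the $A$/$B$ symmetry, as you implicitly note. Second, under this paper's definition a cyclic edge-cut must leave two \emph{components} each containing a cycle, whereas $G\setminus\partial X$ could a priori have more than two components; this is harmless, since every component of $G\setminus \partial X$ lies entirely inside $X$ or inside its complement and each side contains a cycle, but you should either say this or replace $X$ by a cycle-containing component of $G[X]$, whose edge-boundary is a subset of $\partial X$.
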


\begin{thm}\label{thm2-7}
Let $G$ be a Pfaffian cubic brace different from the Heawood graph.
Then $c\lambda (G)=4$. Further,  for any 4-cycle $C$ of $G$, $E(C, G-V(C))$ 
is a cyclic 4-edge cut.
\end{thm}

\begin{proof} First we claim that if $G$ has a 4-cycle $C$, then $E':=E(G-V(C),C)$ forms a cyclic 4-edge cut.  

It is obvious that $|E'|=4$. Further, $G$ has at least 
$n\ge 8$ vertices; Otherwise $G$ is isomorphic to $K_{3,3}$, which is not Pfaffian, a contradiction.  Since $G$ is 3-connected, $G-V(C)$ is connected. Hence $G-V(C)$  has at least one cycle, because it has $n-4$ vertices and $3n/2-8\ge
n-4$ edges ($n\ge 8$). That is,   $E'$ is a cyclic 4-edge-cut and the claim holds.

If $G$ is planar,  $G$ must contain a 4-cycle since $G$ is bipartite. The Claim implies that $G$ contains a cyclic 4-edge cut. If $G$ is  non-planar,  then $G$ can be  generated from Pfaffian cubic braces $G_1$, $G_2$, and $G_3$ by the
tri-sum along a 4-cycle $C$. Since $G$ is also different from the Heawood graph, the Claim implies that each $E(C, G_i-V(C))$ is a cyclic 4-edge cut of $G_i$, which is also a cyclic 4-edge cut of $G$ since each $G_i-V(C)$ contains a cycle. So $c\lambda(G)\le 4$. Hence the
theorem follows from Theorem \ref{thm2-5}.
\end{proof}

Let $G$ be a 3-connected graph with a tri-cut $W$ of size 4. Then $W$ is called an {\em ideal tri-cut}  if $W$ is independent in $G$ and $G-W$ has exactly three components $G_1',G_2'$ and $G_3'$ such that each $G_i'$ has at least four vertices, and each $E(W, G_i')$ is a matching saturating all vertices of $W$.

\begin{prop}\label{prop2-8}
Let $G$ be a Pfaffian cubic brace that is non-planar and is different from the Heawood graph. Then $G$ has an ideal tri-cut $W$.
\end{prop}

\begin{proof} By Theorem \ref{thm2-6}, $G$ is generated  from three Pfaffian cubic braces $G_1$, $G_2$ and $G_3$ by a tri-sum operation along a 4-cycle $C$. Then $W:=V(C)$ is independent in $G$. Since each $G_i$ is 3-connected and Pfaffian, $G_i-V(C)$ is connected and has at least four vertices. Since each
vertex of $W$ has a neighbor in each $G_i-W$, $W$ is a minimal vertex-cut of $G$. It remains to show that each $E(W, G_i-W)$ is a matching saturating all vertices of $W$. This holds because $E(W, G_i-W)$ is a minimum cyclic edge-cut of $G_i$ (see Theorem \ref{thm2-7}).
\end{proof}

\section{Construction of polyhex graphs}

A polyhex graph is a cubic graph embedded on a surface such that
every face is a hexagon, a cycle of length six. So a polyhex graph
is a strong embedding. 
By Euler's formula, the
surfaces can be only the torus and the Klein bottle \cite{DFRR}.

Two graphs $G_1$ and $G_2$ are isomorphic, denoted by
$G_1\cong G_2$,  if there is a bijection
$\sigma: V(G_1)\to V(G_2)$ such that $uv\in E(G_1)$ if and only if
$\sigma(u)\sigma(v)\in E(G_2)$, and such $\sigma$ is an {\em isomorphism}  between
$G_1$ and $G_2$. For two polyhex graphs $G_1$ and $G_2$, an
isomorphism $\sigma$ from $G_1$ to $G_2$ is {\em hexagon-preserving}
if  $h\subseteq G_1$ is a hexagon if and only if  $\sigma(h)$ is also a hexagon of
$G_2$. An {\em isomorphism} from a graph $G$ to itself is called an
{\em automorphism}. A graph $G$ is {\em vertex-transitive} if, for
any two vertices $v_1,v_2\in V(G)$, there exists an automorphism
$\sigma$ such that $\sigma(v_1)=v_2$. A polyhex graph $G$ is {\em
hexagon-transitive} if for any two hexagons $h_1$ and $h_2$, there
exists a hexagon-preserving automorphism $\sigma$ such that $\sigma(h_1)=h_2$.

\begin{figure}[!hbtp]\refstepcounter{figure}\label{fig3-1}
\begin{center}
\includegraphics[scale=1.4]{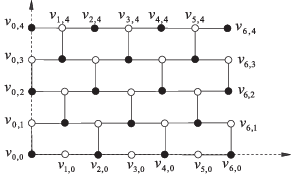}\\
{Figure \ref{fig3-1}: A rectangular hexagon lattice $L(6,4)$. }
\end{center}
\end{figure}

Take a rectangle $R$ on  a 2-dimensional Euclidean plane $\mathbb R^2$: $R=\{(x,y): 0\leq x\leq k, 0\leq y\leq q\}$, where $k$ and $q$ are  positive integers. Let $v_{i,j}$
be a vertex corresponding to the point $(i,j)$ where $i$ and $j$ are
non-negative integers. A rectangular hexagon lattice $L(k,q)$
is a graph on $R$ consisting of all vertices $v_{i,j}$ in $R$  and edges in
$\{v_{i,j}v_{i+1,j}|0\le j\le q, 0\le i\le k-1\}\cup
\{v_{i,j}v_{i,j+1}| 0\le i\le k, 0\le j\le q-1 \mbox{ and } i\equiv
j \mod 2\}$. For example, $L(6,4)$ is shown in Figure \ref{fig3-1}.

The vertices  $v_{i,j}$ of a $L(k,q)$ are colored in black or white according as  $i+j$ is even or odd. For even $k$, a polyhex tube $L'(k,q)$ is obtained from $L(k,q)$ by identifying
the vertices $v_{0,j}$ and $v_{k,j}$ for $j=0,1,\ldots,q$. So $L(k,q)$ and $L'(k,q)$ are  bipartite graphs. The cycle
$v_{0,i}v_{1,i}\cdots v_{k,i}$ is  called the {\em $i$-th
layer}, denoted by $L_i$. Let
$h_{i,j}$ denote the hexagon with the center $(2i+\alpha_j,
j+\frac{1} 2)$ where 
\[
\alpha_j:=\left\{
 \begin{array}{ll}
0,     &\mbox{if $j\equiv 1\pmod 2$;}\\
1,   &\mbox{if $j\equiv 0\pmod 2$.}
 \end{array}
 \right.
\] 
Equivalently,
\[
h_{i,j}:=\left\{
 \begin{array}{ll}
v_{2i-1,j}v_{2i,j}v_{2i+1,j}v_{2 i+1,j+1}v_{2i,j+1}v_{2i-1,j+1}v_{2i-1,j},     &\mbox{if $j\equiv 1\pmod 2$;}\\
v_{2i,j}v_{2i+1,j}v_{2i+2,j}v_{2i+2,j+1}v_{2i+1,j+1}v_{2i,j+1}v_{2i,j},   &\mbox{if $j\equiv 0\pmod 2$.}
 \end{array}
 \right.
\]

Altschuler \cite{A} showed that polyhex graphs on torus, denoted by $T(k,q,t)$,  can be constructed in the following way: From $L'(k,q)$ ($k$ even)
identify the vertices $v_{i,0}$ and $v_{i+q+2t,q}$ where $0\le t\le
k/2-1$ and the first subscript is always modulo $k$; that is, $v_{i,0}$ is connected to $v_{i+q+2t,q-1}$ by an edge for each odd $i$. For example, see Fig. \ref{fig3-2}.

\begin{figure}[!hbtp]\refstepcounter{figure}\label{fig3-2}
\begin{center}
\includegraphics[scale=1.3]{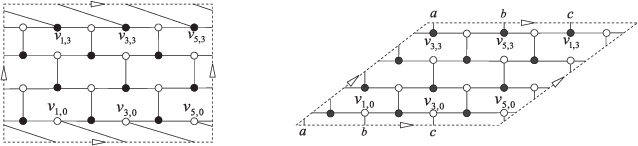}\\
{Figure \ref{fig3-2}: Representations for the polyhex graph
$T(6,4,0)$ on the torus. }
\end{center}
\end{figure}

Since a polyhex graph on the torus is a strong
embedding, we have the following result.

\begin{thm}\label{thm3-1}
A polyhex graph on the torus is isomorphic to
$T(k,q,t)$ for $k\equiv 0 \pmod 2$ and $(k,q,t)\notin \{(2,q,t),
(4,1,t), (k,1,0), (k,1,k/2-1)|k,q,t\in \mathbb Z, 0\le t\le
k/2-1\}$.
\end{thm}

Thomassen \cite{T91} classified polyhex graphs into seven types: two on the torus, five on the Klein bottle. According to Thomassen's constructions, Li et al. \cite{LLZ} reclassified the polyhex graphs on the Klein bottle into the following two types:

\begin{itemize}
\item Bipartite polyhex
$K_e(k,q)$($q\ge 2$, $k\ge 4$ is even): From $L'(k,q)$,
identify $v_{i,0}$ with $v_{k-i,q}$ if $q$ is even, and $v_{i,0}$ with $v_{k-i-1,q}$ if $q$ is odd.

\item Non-bipartite polyhex  $K_o(k,q)$ ($q\ge 2$ is even, and $k\ge 3$): From $L(k,q)$ first identify
$v_{i,0}$ with $v_{i,q}$;
then identify $v_{0,j}$ with $v_{k,q-1-j}$
if $k$ is even, and $v_{0,j}$ with $v_{k,q-j}$ if
$k$ is odd.
\end{itemize}

\begin{figure}[!hbtp]\refstepcounter{figure}\label{fig3-3}
\begin{center}
\includegraphics[scale=1.3]{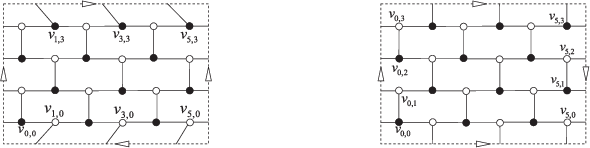}\\
{Figure \ref{fig3-3}: Polyhex graphs $K_e(6,4)$ (left) and
$K_o(6,4)$ (right) on the Klein bottle. }
\end{center}
\end{figure}

\begin{thm}\cite{LLZ}\label{thm3-1-2}
A polyhex graph on the Klein bottle is isomorphic to
either $K_e(k,q)$ ($q\ge 2$, even $k\ge 4$) or $K_o(k,q)$ (even $q\ge 2$,
$k\ge 3$).
\end{thm}

Polyhex graphs contains many interesting graphs. For example,
$T(14,1,2)$ is isomorphic to the Heawood graph, both $T(8,1,1)$ and
$K_e(4,2)$ are isomorphic to the cube $Q_3$,  and $T(6,1,1)$ is
isomorphic to $K_{3,3}$.

\begin{prop}[\cite{ZY}]\label{prop3-2}
There is a hexagon-preserving isomorphism between $T(k,q,t)$ and
$T(k,q,t')$ where $t'\equiv (k-2q-2t)/2$ $(\mbox{\upshape{mod} }
k/2)$.
\end{prop}

\begin{prop}[\cite{SLZ}]\label{prop3-3}
Every polyhex graph on the torus is vertex-transitive and
hexagon-transitive.
\end{prop}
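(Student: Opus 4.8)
The plan is to use Theorem~\ref{thm3-1}, which says that every polyhex graph on the torus is some $T(k,q,t)$, and to realize $T(k,q,t)$ as a quotient of the infinite honeycomb tessellation $H$ of the plane. In the brick-wall coordinates used to build $L(k,q)$, the two gluings in the construction of $T(k,q,t)$ identify points of $H$ differing by the translations $(k,0)$ and $(q+2t,q)$, so $T(k,q,t)=H/\Lambda$ with $\Lambda=\langle(k,0),\,(q+2t,q)\rangle$. Since $k$ is even we have $(k,0)=\tfrac{k}{2}(2,0)$ and $(q+2t,q)=q(1,1)+t(2,0)$, so $\Lambda$ is a subgroup of the abelian translation group $T_H$ of $H$; consequently $T_H/\Lambda$ acts on $T(k,q,t)$ by automorphisms. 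I would run the whole argument through this action, supplemented by one color-reversing symmetry.

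First I would record two generating translations explicitly on the vertex labels, namely $\alpha\colon v_{i,j}\mapsto v_{i+2,j}$ and $\beta\colon v_{i,j}\mapsto v_{i+1,j+1}$, with indices read modulo the two identifications. Each preserves the parity of $i+j$, hence preserves both the horizontal edges $v_{i,j}v_{i+1,j}$ and the staggered vertical edges $v_{i,j}v_{i,j+1}$ (present exactly when $i\equiv j$), so each is a well-defined automorphism. Applying $\alpha^{m}\beta^{n}$ to $v_{0,0}$ yields $v_{2m+n,\,n}$, i.e.\ every $v_{i,j}$ with $i+j$ even, so $\langle\alpha,\beta\rangle$ acts transitively on each of the two color classes; thus it has exactly two vertex-orbits. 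The same two translations generate $T_{H}=\{(a,b):a+b\text{ even}\}$, and the hexagon centers form a single coset of this lattice, so $\langle\alpha,\beta\rangle$ is already transitive on the hexagons of $T(k,q,t)$. This settles hexagon-transitivity.

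To merge the two vertex-orbits I would introduce the central inversion $\gamma\colon v_{i,j}\mapsto v_{1-i,\,-j}$ about the midpoint of the edge $v_{0,0}v_{1,0}$. One checks directly that $\gamma$ carries horizontal edges to horizontal edges and vertical edges to vertical edges, so it is a symmetry of $H$; it reverses the parity of $i+j$, hence interchanges the two color classes, sending the black vertex $v_{0,0}$ to the white vertex $v_{1,0}$. Combined with the transitive action of $\langle\alpha,\beta\rangle$ on each class, $\gamma$ then forces a single vertex-orbit, giving vertex-transitivity.

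The main obstacle is the descent bookkeeping: one must verify that $\alpha$, $\beta$, and $\gamma$ are simultaneously compatible with \emph{both} identifications for every admissible triple $(k,q,t)$. For the translations this amounts to $\Lambda\subseteq T_{H}$, which I checked above. For $\gamma$ it amounts to $\gamma$ normalizing $\Lambda$, and this is automatic for a point reflection, since conjugating a translation $\tau_{v}$ by a central inversion returns $\tau_{-v}$, and $-v\in\Lambda$ whenever $v\in\Lambda$. The remaining care is to confirm that for the degenerate small triples excluded after the definition of $T(k,q,t)$ (those giving non-strong embeddings, namely $(2,q,t)$, $(4,1,t)$, $(k,1,0)$, and $(k,1,k/2-1)$) no orbit count collapses in a way that invalidates the reasoning. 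Once this compatibility is verified, both transitivity statements follow immediately from the induced action of $T_{H}/\Lambda$ together with $\gamma$.
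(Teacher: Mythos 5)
Your proposal is correct, but note that the paper contains no internal proof of Proposition~\ref{prop3-3} to compare against: the statement is imported verbatim from \cite{SLZ}, where toroidal polyhexes are likewise modelled as quotients of the hexagonal tessellation. Your argument is thus a self-contained reconstruction of the standard proof. The key identification $T(k,q,t)\cong H/\Lambda$ with $\Lambda=\langle(k,0),(q+2t,q)\rangle$ is exactly right: the two gluings in the construction are the translations by these vectors, and your parity computations check out --- the brick-wall translations are precisely $\{(a,b): a+b \text{ even}\}$ (a translation preserves the vertical edges $v_{i,j}v_{i,j+1}$, present iff $i\equiv j \pmod 2$, iff $a\equiv b \pmod 2$), $\Lambda$ lies inside this group since $k$ is even and $(q+2t)+q=2(q+t)$, and the point inversion $\gamma\colon v_{i,j}\mapsto v_{1-i,-j}$ maps vertical edges to vertical edges because $1-i\equiv -j-1 \pmod 2$ iff $i\equiv j \pmod 2$, while it reverses the parity of $i+j$ and normalizes $\Lambda$ since $\gamma\tau_v\gamma^{-1}=\tau_{-v}$. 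This gives hexagon-transitivity from the translations alone (the hexagon positions $(i,j)$ with $i+j$ even form a single $T_H$-orbit) and vertex-transitivity after adjoining $\gamma$. One remark: your final worry about the degenerate triples $(2,q,t)$, $(4,1,t)$, $(k,1,0)$, $(k,1,k/2-1)$ is moot --- those are excluded from the definition of $T(k,q,t)$ precisely because they do not yield strong embeddings, so Theorem~\ref{thm3-1} never produces them and your argument need not accommodate them; moreover the quotient-group action argument is insensitive to such collapses anyway, since $T_H/\Lambda$ and $\gamma$ act on $H/\Lambda$ by graph automorphisms regardless of the parameters. Compared with simply citing \cite{SLZ}, your covering-space formulation buys a little more than the proposition asserts: it exhibits an abelian automorphism group acting regularly on each color class and transitively on faces, which is the structural fact underlying the later use of hexagon-transitivity in Lemma~\ref{lem3-6}.
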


\begin{thm}[\cite{YZ}]\label{thm3-4}
Every bipartite polyhex graph is a brace.
\end{thm}

By Theorems \ref{thm2-5} and \ref{thm3-4}, we have the following
result.

\begin{cor}\label{cor3-5}
Every bipartite polyhex graph is cyclically 4-edge-connected.
\end{cor}

\begin{lem}\label{lem3-6}
Let $G$ be a polyhex graph on the torus. Then {\upshape
$\text{fw}(G)= 2$} if and only if $G$ is isomorphic to $T(k,q,t)$
where integers $k,q,t$ satisfy $(k,q,t)\in \{(4,q,t)|q\ge 2\}$ or
$(k,q,t)\in \{(k,2,t)|k\ge 6, t\in \{k/2-2, k/2-1,0\}\}$ or
$(k,q,t)\in \{(k,1,t)|k\ge 6, k/4-1\le t\le k/4\}$.
\end{lem}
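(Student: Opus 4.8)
The plan is to characterize exactly when a toroidal polyhex graph $T(k,q,t)$ has face-width exactly $2$ by directly searching for a short non-contractible closed curve. Since $G$ is a strong embedding, we already know $fw(G)\ge 2$, so the task reduces to showing $fw(G)=2$ precisely in the three listed families and $fw(G)\ge 3$ otherwise. The natural tool is the dual interpretation: a non-contractible closed curve meeting the graph in exactly two points corresponds (up to homotopy) to a sequence of faces $f_0,\dots,f_{m-1}$ forming a cyclic chain where consecutive faces share an edge and the curve passes through the two vertices where it re-enters the graph. Concretely, $fw(G)=2$ iff there exist two vertices (or two edges) whose removal, together with a suitable choice of facial walks, carries a non-contractible cycle; in the hexagonal setting I would look for two hexagons $h,h'$ sharing two \emph{distinct} edges, or for a ``short'' non-contractible circuit of length two through the identification pattern.

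First I would set up coordinates on the lattice $L(k,q)$ and describe the three homotopy classes of non-contractible curves in terms of the gluing data: the ``vertical'' direction (identifying $v_{0,j}$ with $v_{k,j}$), the ``horizontal/helical'' direction (identifying $v_{i,0}$ with $v_{i+q+2t,q}$), and their combinations. For each fixed family I would exhibit an explicit non-contractible curve of face-width $2$: for $(4,q,t)$ with $q\ge 2$ the short non-contractible curve is the one that goes ``the short way around'' in the $k=4$ (width-two-hexagon) direction, crossing only two edges; for $(k,2,t)$ with $t\in\{k/2-2,k/2-1,0\}$ the small value of $q$ together with the specific shift $t$ makes the helical direction close up after passing through only two vertices; and for $(k,1,t)$ with $(k-3)/4\le t\le k/4$ the single-layer lattice $q=1$ forces a very short non-contractible loop whose two crossing points I would pin down by the congruence on $t$. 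In each case I would verify non-contractibility by checking that the curve is not null-homotopic, e.g. by a parity/winding argument on the torus, and that it meets $G$ in exactly two points.

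For the converse — that no other $T(k,q,t)$ has a non-contractible curve meeting $G$ in only two points — I would argue that any such curve determines a chain of hexagons wrapping around the torus, and count: a non-contractible curve of face-width $2$ passes through exactly two vertices $u,w$, so the two arcs it cuts off correspond to two facial paths between $u$ and $w$ lying in complementary regions. Using that every face is a hexagon and that $G$ is cyclically $4$-edge-connected (Corollary \ref{cor3-5}), I would translate ``$fw(G)=2$'' into the existence of two internally disjoint faces sharing two vertices positioned so that the connecting curve is non-contractible, and then show by the coordinate description that this forces $(k,q,t)$ into one of the listed families. This is essentially a finite case analysis on how few lattice steps are needed to return to a glued-identified vertex, and here I would lean on Proposition \ref{prop3-2} to reduce the range of $t$ that must be checked (replacing $t$ by $t'\equiv (k-2q-2t)/2 \pmod{k/2}$), which should collapse the symmetric duplicates in the $q=1$ and $q=2$ rows.

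The hard part will be the converse direction: ruling out short non-contractible curves for all parameter triples outside the three families, since a priori such a curve could wind in a ``diagonal'' homotopy class that is not aligned with either generator of the fundamental group of the torus. The main obstacle is bounding the length of the shortest non-contractible curve from below by $3$ in the generic case, which requires showing that any curve meeting $G$ in only two points must be homotopic to one of the axis-aligned short loops already enumerated; I expect to handle this by a discharging- or distance-type estimate on the lattice (the shortest non-contractible cycle in $G$ has length governed by $k$, $q$, and $t$, and only drops to a ``face-width-two witness'' under the stated inequalities), combined with the vertex- and hexagon-transitivity from Proposition \ref{prop3-3} to assume without loss of generality that one of the two crossing points sits at a fixed base vertex $v_{0,0}$.
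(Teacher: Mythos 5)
Your overall route coincides with the paper's---reduce $fw(G)=2$ to the existence of two hexagons sharing two edges, normalize one hexagon to $h(0,0)$ via hexagon-transitivity (Proposition \ref{prop3-3}), and finish by a case analysis on $(k,q,t)$---but the converse direction, which you yourself flag as the hard part, has a genuine gap. You never establish the reduction that any face-width-two witness must come from two hexagons sharing two edges, and your worry about ``diagonal'' homotopy classes shows what is being missed: once that reduction is in hand, homotopy plays no role whatsoever in the converse. The reduction is local, not global. A non-contractible curve meeting $G$ in exactly two points may be assumed to cross edge interiors: in a cubic graph any two faces incident to a common vertex already share an edge at that vertex, so a crossing through a vertex can be rerouted across that edge without increasing the number of intersection points. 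The curve then lies in the union of exactly two faces, i.e.\ two hexagons sharing the two crossed edges. Conversely, any closed curve crossing $G$ in just two edges is automatically non-contractible, since a contractible one would bound a disk and yield a $2$-edge-cut with cycles on both sides, contradicting cyclic $4$-edge-connectivity (Corollary \ref{cor3-5}). Consequently the ``discharging- or distance-type estimate'' you propose for bounding the shortest non-contractible curve is neither needed nor likely to be made rigorous; it replaces a one-line local observation with machinery harder than the lemma itself.

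With the reduction in place, the converse is the short enumeration you sketch but never perform, and that enumeration is the entire content of the lemma: fix $h_1=h(0,0)$ and ask which hexagons $h(i,j)$ can share two edges with it. For $k>4$ and $q>2$ every hexagon meets $h(0,0)$ in at most one edge, so $fw(G)\ge 3$. For $k=4$, the hexagon $h(1,0)$ shares the two edges $v_{0,0}v_{0,1}$ and $v_{2,0}v_{2,1}$ with $h(0,0)$, giving $fw=2$ for all $q\ge 2$ and all $t$. For $q=2$ and $k>4$ the partner hexagon must be $h(1,1)$ or $h(k-1,1)$, which pins $t\in\{k/2-2,\,k/2-1,\,0\}$. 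For $q=1$, the identification $v_{0,0}=v_{1+2t,1}$ gives $h(0,0)=h(1+2t,1)$, which meets $h(2t,0)$ and $h(2t+2,0)$; the condition that one of these intersections consist of two edges translates into $k\le 4t+1\le k+1$ or $k\le 4t+3\le k+1$, i.e.\ $(k-3)/4\le t\le k/4$. Your constructive half (explicit face-width-two curves for each listed family) matches the paper and is fine, but without the local reduction and this explicit computation the argument is not a proof, and the global tools you reach for in their place (homotopy classification of essential curves, discharging) point away from the simple mechanism that actually drives the result.
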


\begin{proof}
Let $G$ be a polyhex graph on the torus. Then $G$ is isomorphic to
some $T(k,q,t)$ by Theorem \ref{thm3-1}. It follows that $\text{fw}(G)= 2$ if and only if  $G$
has two distinct hexagons $h_1$ and $h_2$ which intersect in two edges. By
Proposition \ref{prop3-3}, without loss of generality let $h_1$ be the hexagon $h_{0,0}$ and let $h_2=h_{x,y}$  with $0\leq y\leq q-1$. Since $h_1$ and $h_2$ intersect in two edges,
$y=$0 or 1.

If $y=1$, then $q=2$. If follows that $x=1$ or $k-1$ since
$k>4$. Hence $t\in \{k/2-2, k/2-1,0\}$. Conversely, a polyhex
$T(k,2,t)$ with $t\in \{k/2-2, k/2-1,0\}$ does have face-width 2.

If $y=0$, then either $k=4$ or $q=1$.  If $k=4$,  then $q\ge 2$ by Theorem~\ref{thm3-1}, and further $\text{fw}(T(4,q,t))=2$. The lemma 
holds. 
So suppose $q=1$.   Then $v_{i,0}=v_{i+1+2t,1}$ for each $i$.  
As $h_1$ and $h_2$ intersect in two edges,  $\{v_{0,0}, v_{2,0}\}\cap
V(h_{x,0})\ne\emptyset$ and $\{v_{2x,0}, v_{2x+2,0}\}\cap
V(h_{0,0})\ne \emptyset$. Note that $\{v_{0,0},v_{2,0}\}\cap V(h_{x,0})\ne \emptyset$ implies
\begin{equation}2x-1\le 1+2t\le 2x+1,\end{equation} and $\{v_{2x,0}, v_{2x+2,0}\}\cap
V(h_{0,0})\ne \emptyset$ implies 
\begin{equation}  k-1\le 2x+1+2t \le k+1.\end{equation}
Combining inequalities (1) and (2), we have 
 $ k/4-1\le t\le k/4$. 
 Conversely, if $k/4-1\le t\le k/4$ and $k\ge 6$, let $x=\lfloor k/4\rfloor$. It follows that $h_{0,0}$ and $h_{x,0}$ intersect two edges. So $T(k,1,t)$ with $k/4-1\le t\le k/4$ and $k\ge 6$ has face-width 2. 
\end{proof}

By Lemmas \ref{heawood} and \ref{lem3-6}, the Heawood graph has an embedding $T(14,1,2)$ on the
torus with face-width 3. \medskip

\begin{thm}\label{thm:4-8}
Let $G$ be a polyhex graph on the Klein bottle. Then
{\upshape$\text{fw}(G)=\min\{\lceil k/2\rceil, q\}.$}
\end{thm}

\begin{proof}
Let $G$ be a polyhex graph on the Klein bottle. By Theorem \ref{thm3-1-2}, 
$G$ is isomorphic to either $K_e(k,q)$ or $K_o(k,q)$.

Let $\mathcal H$ be a set of
hexgons of $G$ such that $\bigcup\limits_{h_{x,y}\in \mathcal H} h_{x,y}$ contains a non-contractible curve $\ell$ and $\mathcal H$ is minimum. Then $\text{fw}(G)
=|\mathcal H|$. 

First, assume that $G$ is isomorphic to $K_e(k,q)$ ($k$ is even). Let $H_i$ be the graph consisting of all hexagons $h_{x,y}$ with $y=i$.
If $\mathcal H\cap H_i\ne \emptyset$ for all $i\in \mathbb Z_k$, then $|\mathcal H|\ge q$.
So suppose that $\mathcal H\cap H_{t}=\emptyset$ for some $t\in \mathbb Z_k$. Let 
$E_t=\{v_{i,t}v_{i, t+1}| i+t\equiv 0 \pmod 2, i\in \mathbb Z_k\}$. All hexagons of $K_e(k,q)-E_t$ induce a tube. 
Let $R_{j}$ be the graph consisting of all hexagons $h_{m,y}$ with $y\in \mathbb Z_q\backslash\{t\}$,
where 
\[
m =\left\{
 \begin{array}{ll}
 j     &\mbox{if $0\le y\le t-1$;}\\
 k/2-j   &\mbox{if $t+1\le y\le q-1$.}
 \end{array}
 \right.
\] Denote 
$E^j=\{e=h_{m,y}\cap h_{m,y+1}| y,y+1\in \mathbb Z_q\backslash\{t\}\}
\cup\{e_1,e_2\}$ where $e_1=h_{j,t-1}\cap h_{j,t}$ and $e_2=h_{k/2-j,t}\cap h_{k/2-j,t+1}$. Note that $E^j\subset E(R_j)$.
Then the union of all hexagons of $K_e(k,q)-E_t-E^j$ does not contain a non-contractible closed curve.
So $\mathcal H\cap R_j\ne \emptyset$ for any $j\in \mathbb Z_{k/2}$. Hence $|\mathcal H|\ge k/2$.
It follows that $\fw(G)=|\mathcal H|\ge \min\{q, k/2\}$.

Conversely, each $H_{i}$ and $R_{\lceil k/4\rceil}$ of $K_e(k,q)$ contains a non-contratible closed curve. 
Note that $H_i$ has $k/2$ hexagons and $R_{\lceil k/4\rceil}$ has $q$ hexagons. 
It follows that $\fw(G)\le \min\{q,k/2\}$. Hence $\fw(G)= \min\{q,k/2\}$.

In the following, assume that $G$ is isomorphic to $K_o(k,q)$ ($q$ is even). 
Let $R_i$ be the graph consisting of all hexagons $h_{x,y}$ with $x=i$.
If $\mathcal H\cap R_i\ne \emptyset$ for all integer $i\in [0,\lceil k/2\rceil]$, then $|\mathcal H|\ge \lceil k/2\rceil$.
So suppose that $\mathcal H\cap R_t=\emptyset$ for some $t\in \mathbb Z_{\lceil k/2\rceil}$. 
Denote $E^t=\{v_{2t,j}v_{2t+1,j}|j\in \mathbb Z_q\}$. All hexagons of $K_o(k,q)-E^t$ 
induce a tube. Let $H_j$ be the graph consisting of all hexagons $h_{x,m}$ with $x\in 
\mathbb Z_{\lceil k/2\rceil\backslash\{t\}}$, where 
\[
m =\left\{
 \begin{array}{ll}
j     &\mbox{if $0\le x\le t$;}\\
q-1-j   &\mbox{if $t+1\le x\le  k/2-1$ and $k$ is even;}\\
q-j &\mbox{if $t+1\le x\le  \lceil k/2\rceil-1$ and $k$ is odd;}
 \end{array}
 \right.
\]
Let $E_j=\{v_{i,m}v_{i,m+1}| i+m\equiv 0 \pmod 2, i\in \mathbb Z_{k}\}$ where $m$ is defined as above.
Then the union of all hexagons of $K_o(k,q)-E^t-E_j$ does not contain
a non-contractible closed curve. Note that the edges in $E_j$ are contained by only hexagons
from $H_j$. So $\mathcal H\cap H_j\ne \emptyset$ 
for any $j\in \mathbb Z_{q}$. Hence $|\mathcal H|\ge q$. It follows that 
$\fw(G)=|\mathcal H|\ge \min\{q, \lceil k/2\rceil\}$. 

On the other hand, 
both $H_{q/2}$ and each $R_{i}$ of $K_o(k,q)$
contain a non-contractible closed curve. Note that $H_{q/2}$ has $\lceil \frac k 2\rceil$ 
hexagons, and $R_{\lceil k/2 \rceil-1}$ contains $q$ hexagons. So 
$\text{fw}(G)\le \min\{\lceil \frac k 2\rceil, q\}$. This completes the proof.
\end{proof}

\begin{lem}\label{lem3-10}
Let $G$ be a polyhex graph on the Klein bottle. Then
{\upshape$\text{fw}(G)=2$} if and only if $G$ is isomorphic to
either $K_e(k,q)$ with $k=4$ or $q=2$, or $K_o(k,q)$ with $3\le k\le
4$ or $q=2$.
\end{lem}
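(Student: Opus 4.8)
The plan is to follow the template of Lemma~\ref{lem3-6}, reducing the statement about face-width to the purely combinatorial condition that $G$ possesses two hexagons sharing two distinct edges. First I would record this reduction. Since a polyhex graph is a strong embedding, $fw(G)\ge 2$ automatically, and $fw(G)=2$ precisely when there is a non-contractible simple closed curve meeting $G$ in two points. Putting such a minimal curve in general position so that it crosses only edges, it must cross two edges $e_1,e_2$ and pass through the interiors of exactly two faces, which are therefore two hexagons $h_1,h_2$ with $\{e_1,e_2\}\subseteq \partial h_1\cap\partial h_2$ (the two crossed edges are distinct, else the curve bounds a bigon and is contractible). Conversely, given two hexagons meeting in two edges, the curve that runs through $h_1$ and $h_2$ crossing $e_1$ and $e_2$ meets $G$ in two points, and I would argue it is non-contractible exactly as in the proofs of Theorem~\ref{thm2-4} and Lemma~\ref{lem3-6}, using that the bipartite members are braces (Theorem~\ref{thm3-8}) and hence $3$-connected, and inspecting the lattice picture directly for the non-bipartite members. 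Thus $fw(G)=2$ if and only if two hexagons of $G$ share two edges, and by Theorem~\ref{thm3-7} it suffices to analyze the families $K_e(k,q)$ and $K_o(k,q)$.

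The next observation is that inside the planar block $L(k,q)$ two hexagons share at most one edge, so a two-edge intersection can be produced only by the boundary identifications. I would therefore examine the two seams of each family separately. The difficulty relative to Lemma~\ref{lem3-6} is that Proposition~\ref{prop3-3} (vertex- and hexagon-transitivity) is available only on the torus; on the Klein bottle I can use only the residual translational symmetry along the non-identified direction together with direct inspection of the hexagons lying against each seam.

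For $K_e(k,q)$ (with $k\ge 4$ even, $q\ge 2$) the horizontal identification $v_{0,j}\sim v_{k,j}$ wraps each layer, and I expect to show that it forces two hexagons, namely $h(0,j)$ and $h(2,j)$, to share their two vertical edges precisely when the width is minimal, $k=4$; the flip identification $v_{i,0}\sim v_{k-i,q}$ glues the top and bottom hexagon-rows, and when there are only two such rows, i.e. $q=2$, it forces a top hexagon and a bottom hexagon to share two edges. For all remaining parameters $k\ge 6$, $q\ge 3$ the seams are too wide and no two hexagons meet in two edges, so $fw(G)\ge 3$. For $K_o(k,q)$ (with $q$ even, $k\ge 3$) the analysis is parallel with the roles of the seams exchanged: the straight vertical identification $v_{i,0}\sim v_{i,q}$ produces a two-edge intersection exactly when $q=2$, while the orientation-reversing horizontal identification, which is $v_{0,j}\sim v_{k,q-1-j}$ for even $k$ and $v_{0,j}\sim v_{k,q-2-j}$ for odd $k$, produces one exactly in the narrow range $3\le k\le 4$, the two flip formulas accounting for the two admissible parities of $k$. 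For $k\ge 5$, $q\ge 4$ one checks that no two hexagons share two edges.

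I expect the main obstacle to be the flip (orientation-reversing) seam peculiar to the Klein bottle. Unlike the torus case, the reflection in the gluing changes which hexagons become adjacent across the seam and whether the short cycle they span is one- or two-sided, so I must track these carefully to confirm non-contractibility of the resulting curve in the \emph{if} direction. Equally laborious is the completeness of the \emph{only if} direction: without the transitivity of Proposition~\ref{prop3-3}, ruling out accidental two-edge coincidences for every large pair $(k,q)$ requires a uniform argument, for which I would parametrize the hexagons adjacent to each seam and show that widening either dimension separates every candidate pair so that they retain at most one common edge.
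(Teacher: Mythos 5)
Your proposal is correct and follows essentially the same route as the paper: split via Theorem~\ref{thm3-7} into $K_e(k,q)$ and $K_o(k,q)$, reduce $fw(G)=2$ to the existence of two hexagons sharing two edges (which can only arise across the identification seams), exhibit such pairs for the small parameters, and observe that distinct hexagons meet in at most one edge otherwise. If anything you are more explicit than the paper, which merely names the witnessing hexagon pairs (e.g.\ $h(0,0)\cup h(1,0)$, $h(i,0)\cup h(i,1)$ with $i=\lceil\frac{k-2}{4}\rceil$) and leaves the non-contractibility checks and the exclusion of $K_o(k,q)$ with $k\ge 5$, $q\ge 4$ to inspection.
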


\begin{proof} By Theorem \ref{thm:4-8}, $\text{fw}(G)=2$ if and only if 
$\min\{\lceil \frac k 2\rceil, q\}=2$. It follows that $3\le k\le 4$ or $q=2$.

By Theorem \ref{thm3-1-2}, $G$ is isomorphic to either $K_e(k,q)$ with $k=4$ or $q=2$, or $K_o(k,q)$ with $3\le k\le
4$ or $q=2$.
\end{proof}

\section{Pfaffian polyhex graphs}

Now we are ready to characterize Pfaffian polyhex graphs.

\subsection{Polyhex graphs on the torus}

Let $G$ be a polyhex graph on the torus. If $G$ is planar, then it
is Pfaffian. For a non-planar polyhex graph $G$ not isomorphic to
the Heawood graph, $G$ must contain a tri-cut by Proposition
\ref{prop2-8} if it is Pfaffian.

\begin{lem}\label{lem4-1}
Let $G$ be a polyhex graph on the torus. Then $G$ does not contain
an ideal tri-cut.
\end{lem}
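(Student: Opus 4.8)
The plan is to argue by contradiction. Suppose the polyhex $G$ on the torus has an ideal tri-cut $W=\{v_1,v_2,v_3,v_4\}$, obtained from the tri-sum of cubic braces $G_1,G_2,G_3$ along the $4$-cycle $C=v_1v_2v_3v_4$, so that in $G$ the set $W$ is independent, each $v_j$ has exactly one neighbour in each part $P_i:=G_i-W$, and $E(W,P_i)$ is a matching of size $4$ whenever $P_i\ne K_2$. Since $G$ is a brace it is bipartite (Theorem \ref{thm3-4}), and I may $2$-colour $V(G)$ so that $v_1,v_3$ are black and $v_2,v_4$ are white. The parts $P_1,P_2,P_3$ are pairwise non-adjacent: every path between two of them must pass through $W$. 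This single structural fact is what I will exploit.

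The key local observation is that \emph{every face of $G$ incident with a vertex of $W$ contains at least two vertices of $W$}. Indeed, since $G$ is cubic and each $v_j$ sends its three edges to the three distinct parts, each of the three hexagons around $v_j$ is bounded by two edges leading into two \emph{different} parts $P_a,P_b$. Reading such a hexagon $h$ as a $6$-cycle through $v_j$, its boundary leaves $v_j$ into $P_a$ and returns from $P_b$; as $P_a$ and $P_b$ meet only along $W$, the boundary of $h$ must re-enter $W$ at a second vertex. Summing the three faces around each $v_j$ gives $4\times 3=12$ incidences between $W$ and the faces meeting it, and since each such face carries at least two (and, as $W$ is independent on a $6$-cycle, at most three) vertices of $W$, there are at most six faces meeting $W$. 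Using bipartiteness I refine this: the two $W$-vertices on a hexagon cannot be adjacent, so they lie at distance $2$ or $3$ along it; distance $2$ joins two vertices of the same colour through a common neighbour, while the generic case is two $W$-vertices of opposite colour joined by two internally disjoint $3$-paths, one inside each of $P_a,P_b$. In particular such a hexagon forces the neighbour of $v_a$ in $P_i$ to be adjacent in $P_i$ to the neighbour of $v_b$ in $P_i$, for both $i\in\{a,b\}$.

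From here the plan is to show these forced adjacencies exhaust the degrees at the part boundaries and collapse each $P_i$. Each of the four boundary vertices of a part $P_i$ (the neighbours of $v_1,\dots,v_4$) has degree $2$ inside $P_i$; the $W$-incident hexagons using $P_i$ pin down adjacencies among these boundary vertices, and a count of the six faces against the twelve incidences shows that, apart from degenerate $K_2$ parts, all edges at the boundary vertices of $P_i$ are consumed by these faces. Hence each $P_i$ reduces to a short cycle on its boundary vertices (a $4$-cycle, respecting the colouring) or to $K_2$, leaving only finitely many candidate graphs, each on at most $16$ vertices. I would finish by checking that none of these candidates is a polyhex on the torus: each fails either vertex-transitivity (Proposition \ref{prop3-3}) or the classification $G\cong T(k,q,t)$ (Theorem \ref{thm3-1}), exactly as the $10$-vertex tri-sum of three copies of $K_{3,3}$ fails to be vertex-transitive.

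The main obstacle is the middle-to-final step: organising the casework for how the at-most-six $W$-incident hexagons distribute among the black--white pairs of $W$ and the three part-pairs, handling the non-generic distance-$2$ hexagons and the mixed situations in which one or more parts equal $K_2$, and then confirming that every resulting small graph violates vertex-transitivity or the $T(k,q,t)$ form. The crossing argument and the degree accounting are routine; the delicate part is making the enumeration of local configurations complete, so that no genuine polyhex slips through.
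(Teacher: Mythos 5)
Your opening observation is correct and nicely proved: since each $v_j\in W$ sends its three edges into three distinct parts, every face at $v_j$ is bounded by edges into two different parts, and as those parts meet only along $W$ the facial hexagon must contain a second vertex of $W$; the incidence count ($12$ incidences, hence at most six $W$-incident faces) and the distance-$2$/distance-$3$ dichotomy (distance $2$ forces a vertex with two neighbours in $W$, violating the matching condition of Proposition \ref{prop2-8} unless that part is $K_2$) are all sound. But what you have written is a program, not a proof, and you say so yourself: the decisive middle step --- showing that the forced adjacencies collapse every part to a $4$-cycle or $K_2$ in \emph{all} cases, including hexagons carrying three vertices of $W$, the distance-$2$ hexagons, and the mixed situations with one, two or three $K_2$ parts --- is only announced, and the final step, verifying that each of the resulting finitely many candidates (the $16$-vertex tri-sum of three cubes, the $10$-vertex tri-sum of three copies of $K_{3,3}$, and the intermediate mixed graphs) fails to be a toroidal polyhex, is deferred to a check you do not perform. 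Since the lemma is exactly the assertion that no such configuration survives, deferring this enumeration leaves the entire burden of the lemma undischarged; note also that some candidates are not obviously disposable (the $10$-vertex graph embeds on the torus, as in Figure \ref{fig2-4}, and a $14$-vertex mixed candidate has the same order as the Heawood graph, so one genuinely must argue via vertex-transitivity, Proposition \ref{prop3-3}, or the classification, Theorem \ref{thm3-1}, in each case).

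It is worth contrasting this with the paper's route, which avoids local casework entirely. The paper invokes the classification $G\cong T(k,q,t)$ (Theorem \ref{thm3-1}) and argues globally: for $q\ge 2$ it counts how $W$ meets the layers $L_i$ and shows that $G-W$ then has at most two components, contradicting the three parts of a tri-cut; for $q=1$ it cuts the Hamiltonian layer cycle at the four vertices of $W$ into paths $P_1,\dots,P_4$ and uses the torus identifications to show one part would have a single black vertex and no cycle, contradicting the ideal tri-cut structure. Your local approach could likely be completed --- the two-faces-at-the-edge $v_jx$ argument does pin both interior edges at each boundary vertex to $W$-incident hexagons, and the generic case does force each part onto its four boundary vertices --- but until the enumeration of degenerate configurations and the candidate-by-candidate elimination are actually carried out, the proposal has a genuine gap at its core.
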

\begin{proof}
Let $G$ be a polyhex graph on the torus. By Theorem \ref{thm3-1}, $G$ can be represented as  $T(k,q,t)$ for
some  suitable triple of integers $(k,q,t)$. Suppose to the contrary
that $G$ has an ideal tri-cut $W$. Then $|W|=4$ and $G-W$ has exactly three components, denoted by $G_1$, $G_2$ and $G_3$, each of which has at least four vertices.

First suppose $q\ge 2$. If $T(k,q,t)$ has a layer $L_i$ containing at least three vertices of $W$. Then $k\ge 6$ as $W$ is independent in $G$. Since $q\ge 2$,  any other layer $L_j$ ($j\ne i$)
contains at most one vertex in $W$. So all $L_j-W (j\ne i)$ are contained in a common
component of $G-W$. Note that at least one vertex of each component of
$L_i-W$ has a neighbor in either $L_{i+1}-W$ or $L_{i-1}-W$. Hence $G-W$ is connected, a contradiction.
So  every layer $L_i$ has at most two vertices in  $W$. That implies that
$L_i-W$ has at most two paths as components. So $G-W$ has at most
two components, also a contradiction.

Now suppose $q=1$.  By
Lemma \ref{prop3-3}, let
$W=\{v_{i_0,0},v_{i_1,0},v_{i_2,0},v_{i_3,0}\}$ with  $0=i_0<i_1<i_2<i_3\le 2k-2$. Then $L_0-W$ has four paths $P_0$, $P_1$,
$P_2$ and $P_3$ such that $v_{i_{\alpha},0}$ joins $P_{\alpha}$ and $P_{\alpha+1} \mbox{ (the subscripts modulo  4})$. Since $G-W$ has three
components,   precisely two paths
from $P_{\alpha}$'s ($\alpha\in \mathbb Z_4$) belong to a common
component of $G-W$.  Since each $E(W, G_i)$ is a matching, each $P_i$ has at least two vertices and $P_i$ and $P_{i+1}$ can not belong
to a common component.  So we may assume that $P_1$ and $P_{3}$ belong to a common component of $G-W$. Then $P_0$ and $P_2$ themselves induce components of $G-W$.

Let $v_{m,0}\in V(P_2)$ be adjacent to
$v_{0,0}\in W$. Then $m=k-(1+2t)$ since
$v_{m,0}=v_{m+1+2t,1}=v_{0,1}$.  Note that $v_{m+2,0}=v_{k+1-2t,0}$ is adjacent
to $v_{2,0}\in V(P_1)$. Since $P_1$ and $P_2$ belong to different components of $G-W$, $v_{m+2,0}\notin V(P_2)$.  Since each $E(W, G_i)$ is a matching and $P_1$ and $P_3$ belong to the same component of $G-W$, $v_{m+2,0}\not=v_{i_2,0}$ and thus $v_{m+2,0}\in V(P_3)$. Then $v_{m+1,0}=v_{i_2,0}$, and $v_{m-2,0}\in V(P_2)$ is adjacent to $v_{k-2,0}\in V(P_0)$, which contradicts that $P_2$ and $P_0$ belong to different components of $G-W$.
\end{proof}

By Proposition~\ref{prop2-8}, Theorem~\ref{thm3-1} and Lemma~\ref{lem4-1}, we immediately
have the following result.

\begin{cor}\label{thm4-2}
Let $G$ be a polyhex graph on the torus. Then $G$ is Pfaffian if and
only if it is planar or isomorphic to the Heawood graph $T(14,1,2)$.
\end{cor}

By Theorem \ref{thm2-1}, planar polyhex graphs embedded on the torus
must have face-width two. Using Kuratowski's Theorem that a graph is
planar if and only if it contains no $K_5$ and $K_{3,3}$ as minor,
the following lemma characterizes planar polyhex graphs on the
torus.

\begin{lem}\label{lem4-3}
A polyhex graph on the torus is planar if and only if it is
isomorphic to either $T(4,2,t)$, or $T(8,1,t)$ or $T(k,2,k/2-1)$.
\end{lem}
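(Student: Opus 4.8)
The plan is to determine exactly which toroidal polyhex graphs $T(k,q,t)$ are planar. Since every planar polyhex graph admits a torus embedding of face-width two by Theorem \ref{thm2-1}, I would first invoke Lemma \ref{lem3-6} to restrict attention to the three families with $fw(G)=2$: namely $T(4,q,t)$ with $q\ge 2$, the graphs $T(k,2,t)$ with $t\in\{k/2-2,k/2-1,0\}$, and the graphs $T(k,1,t)$ with $(k-3)/4\le t\le k/4$. This reduces the problem to checking planarity within each of these finite-parameter families, which is the essential simplification.

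The forward direction (each listed graph is planar) I would handle by exhibiting explicit plane drawings. For $T(4,2,t)$ I would show the graph is small enough to lay out in the plane directly; for $T(8,1,t)$ (which includes the cube $Q_3=T(8,1,1)$) and for $T(k,2,k/2-1)$ I would give an inductive or ladder-type plane embedding, using the face-width-two structure to cut the torus along a noncontractible curve meeting the graph in two points and unroll it into an annular, hence planar, diagram. The converse (nothing else is planar) is where the real work lies: I would use Kuratowski's theorem and exhibit a $K_{3,3}$ or $K_5$ minor in every $T(k,q,t)$ not on the list. The natural strategy is to show that whenever a hexagonal lattice wraps around the torus with enough ``room,'' one can route three disjoint black-white connections to produce a $K_{3,3}$ subdivision; concretely, a toroidal tiling that is not one of the degenerate small cases contains two disjoint noncontractible cycles in independent homotopy classes, and such a configuration always forces a $K_{3,3}$ minor.

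The main obstacle I anticipate is the converse, and in particular the boundary cases within the face-width-two families. The families singled out in Lemma \ref{lem3-6} are larger than the planar list: for instance $T(k,1,t)$ is allowed for a whole interval of $t$, yet only $T(8,1,t)$ survives as planar, so I must show that $T(k,1,t)$ for $k\ne 8$ in the admissible range still contains $K_{3,3}$. Similarly $T(k,2,t)$ is face-width-two for three values of $t$ but planar only when $t=k/2-1$, so the cases $t=0$ and $t=k/2-2$ must be eliminated. Pinning down these borderline parameter values — distinguishing the planar representatives from their non-planar near-neighbors by locating a concrete Kuratowski minor in each excluded case — is the delicate part; I would organize it by treating $q\ge 3$, $q=2$, and $q=1$ separately and, within each, carefully tracking which identifications on $L(k,q)$ create the minor.

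Throughout I would lean on Proposition \ref{prop3-2} to collapse the parameter $t$ modulo $k/2$ and on the vertex- and hexagon-transitivity from Proposition \ref{prop3-3} to normalize the position of any distinguished hexagon, so that the case analysis need only be carried out up to these symmetries rather than for every triple $(k,q,t)$ individually.
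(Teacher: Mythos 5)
Your proposal follows essentially the same route as the paper's proof: reduce via Theorem \ref{thm2-1} and Lemma \ref{lem3-6} to the three face-width-two families, verify planarity of $T(4,2,t)$ and $T(8,1,t)$ (both are in fact the cube $Q_3$) and $T(k,2,k/2-1)$ by explicit plane embeddings, and kill the remaining borderline cases --- $T(4,q,t)$ with $q\ge 3$, $T(k,2,0)\cong T(k,2,k/2-2)$ via Proposition \ref{prop3-2}, and $T(k,1,t)$ with $k\ne 8$ --- by exhibiting concrete $K_{3,3}$ subdivisions or minors, exactly as the paper does with its Figures \ref{fig4-2}--\ref{fig4-4}. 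One passing heuristic in your sketch is topologically off (on the torus two \emph{disjoint} noncontractible simple closed curves are necessarily homotopic, so they cannot represent independent homotopy classes), but since you explicitly fall back on the case-by-case Kuratowski analysis, this does not change the substance of the plan.
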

\begin{proof} We can see that both $T(4,2,t)$ and $T(8,1,t)$ are isomorphic
to the cube $Q_3$ and hence are planar. For $T(k,2,k/2-1)$, it is isomorphic to $C_k\times K_2$ (a
plane embedding shown in Figure \ref{fig4-1}).

\begin{figure}[!hbtp]\refstepcounter{figure}\label{fig4-1}
\begin{center}
\includegraphics[scale=1.2]{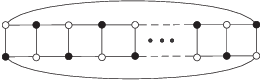}\\
{Figure \ref{fig4-1}: A plane embedding of $T(k,2,k/2-1)$.}
\end{center}
\end{figure}

Next we will show that the other polyhex graphs on the torus
are not planar. By Theorem \ref{thm2-1} and Lemma \ref{lem3-6}, it
suffices to show that $T(4,q,t)$ ($q\ge 3$), and $T(k,2,t)$ ($k\ge 6$, $t=0$
or $k/2-2$), and $T(k,1,t)$ ($k\ge 6$, $k\ne 8$ and $k/4-1\le t\le k/4$) are
all non-planar.

\begin{figure}[!hbtp]\refstepcounter{figure}\label{fig4-2}
\begin{center}
\includegraphics[scale=1.3]{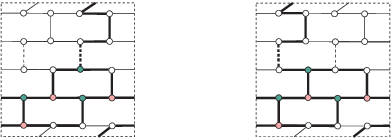}\\
{Figure \ref{fig4-2}: $K_{3,3}$-subdivisions in $T(4,q,t)$ with
$q\ge 3$.}
\end{center}
\end{figure}

Note that $T(4,q,t)$ ($q\ge 3$) is non-planar by Kuratowaski's
Theorem since it contains a subdivision of $K_{3,3}$ as a subgraph
(see the subgraphs induced by thick lines in Figure \ref{fig4-2}).

\begin{figure}[!hbtp]\refstepcounter{figure}\label{fig4-3}
\begin{center}
\includegraphics[scale=1.3]{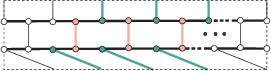}\\
{Figure \ref{fig4-3}: A $K_{3,3}$-minor in $T(k,2,0)$ ($k\ge 6$).}
\end{center}
\end{figure}

By Proposition \ref{prop3-2}, $T(k,2,0)$ is isomorphic to
$T(k,2,k/2-2)$. Since $T(k,2,0)$ ($k\ge 3$) contains a
$K_{3,3}$-minor (see Figure \ref{fig4-3}), both $T(k,2,0)$ and
$T(k,2,k/2-2)$ are non-planar.

\begin{figure}[!hbtp]\refstepcounter{figure}\label{fig4-4}
\begin{center}
\includegraphics[scale=1.3]{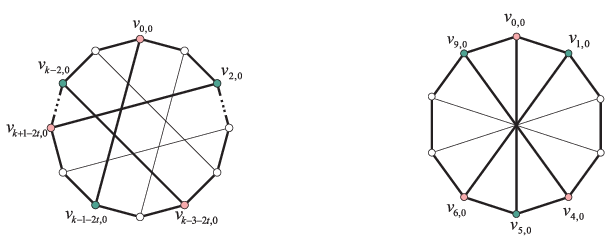}\\
{Figure \ref{fig4-4}: $K_{3,3}$-subdivisions in $T(k,1,t)$ ($k\ge
12$ and $k/4-1\le t\le k/4$ (left)) and $T(10,1,2)$ (right).}
\end{center}
\end{figure}

Now consider $T(k,1,t)$ ($k\ge 6$, $k\ne 8$ and $k/4-1\le t\le k/4$). Since
$T(6,1,1)$ is isomorphic to $K_{3,3}$, it is thus  non-planar. For
$T(k,1,t)$ with $k\ge 12$ and $k/4-1\le t\le k/4$, it contains a
subdivision of $K_{3,3}$ as a subgraph induced by edges
$v_{0,0}v_{k-1-2t,0}, v_{2,0}v_{k+1-2t},v_{k-1,0}v_{v-3-2t,0}$ and
all edges of $L_0$ (see  Figure \ref{fig4-4} (left)). If
$k=10$,  $t=2$. So
$T(10,1,2)$ contains a subdivision of $K_{3,3}$ as shown in Figure
\ref{fig4-4} (right). \end{proof}

\begin{thm}\label{thm4-4}
Let $G$ be a polyhex graph on the torus. Then $G$ has a Pfaffian
orientation if and only if $G$ is isomorphic to either the Heawood graph,or the cube $Q_3$, or $C_k\times K_2$ for even $k\ge 4$.
\end{thm}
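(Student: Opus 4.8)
The plan is to combine the three results that have already been
established in the excerpt: Theorem~\ref{thm4-2}, which says a
torus polyhex is Pfaffian if and only if it is planar or the Heawood
graph, and Lemma~\ref{lem4-3}, which lists exactly the planar
torus polyhexes as $T(4,2,t)$, $T(8,1,t)$ and $T(k,2,k/2-1)$. Since
``has a Pfaffian orientation'' and ``is Pfaffian'' are synonymous by
the definitions in the introduction, Theorem~\ref{thm4-4} should be
a direct bookkeeping consequence of these two facts, and the real
work is just to translate the list in Lemma~\ref{lem4-3} into the
three named graphs in the statement.

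First I would invoke Theorem~\ref{thm4-2} to reduce the problem: a
torus polyhex $G$ is Pfaffian exactly when $G$ is planar or $G\cong
T(14,1,2)$, the Heawood graph. The Heawood case already matches one
of the three alternatives in the conclusion, so it remains to
identify the planar polyhexes. By Lemma~\ref{lem4-3} these are
precisely $T(4,2,t)$, $T(8,1,t)$, and $T(k,2,k/2-1)$. The second
step is to collapse the first two families: the proof of
Lemma~\ref{lem4-3} already records that both $T(4,2,t)$ and
$T(8,1,t)$ are isomorphic to the cube $Q_3$, so these two families
together contribute exactly the single graph $Q_3$. Thus the planar
torus polyhexes are exactly $Q_3$ together with the family
$T(k,2,k/2-1)$.

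Assembling the pieces, the Pfaffian torus polyhexes are then exactly
$Q_3$, the Heawood graph, and $T(k,2,k/2-1)$, which is the claimed
conclusion. For the converse direction I would note that each of
these is indeed Pfaffian: $Q_3$ and $T(k,2,k/2-1)$ are planar, hence
Pfaffian by Kasteleyn's theorem (every planar graph is Pfaffian, as
recalled in the introduction), while the Heawood graph is Pfaffian
as explicitly exhibited in Figure~\ref{fig2-1}. Both implications
therefore follow.

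I do not expect any genuine obstacle here, since all the substantive
content — the characterization of Pfaffian torus polyhexes and the
classification of the planar ones — has already been proved in
Theorem~\ref{thm4-2} and Lemma~\ref{lem4-3}. The only point
requiring a little care is the overlap and naming: one must observe
that the two families $T(4,2,t)$ and $T(8,1,t)$ both reduce to the
single graph $Q_3$ (so the final list has three entries, not four),
and that the Heawood graph coincides with $T(14,1,2)$, which sits
outside the planar list and so must be carried along separately. The
mildly delicate bookkeeping is simply making sure the family
$T(k,2,k/2-1)$ is stated with its correct range of $k$ (even
$k\ge 4$, consistent with the construction of $T(k,q,t)$), so that
no planar polyhex is dropped or double-counted.
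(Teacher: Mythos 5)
Your proposal is correct and matches the paper exactly: in the paper Theorem~\ref{thm4-4} is stated as an immediate consequence of Theorem~\ref{thm4-2} (Pfaffian if and only if planar or the Heawood graph) combined with Lemma~\ref{lem4-3} (the planar torus polyhexes are $T(4,2,t)$, $T(8,1,t)$ and $T(k,2,k/2-1)$), with the identification of the first two families with $Q_3$ already recorded in the proof of Lemma~\ref{lem4-3}. Your bookkeeping — including noting that the Heawood graph is $T(14,1,2)$ and that planarity gives Pfaffianity via Kasteleyn — is precisely the intended derivation.
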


\subsection{Polyhex graphs on the Klein bottle}

First, we consider bipartite polyhex graphs on the Klein bottle.

\begin{lem}\label{lem4-5}
Let $G$ be a bipartite polyhex graph on the Klein bottle. Then $G$
does not contain an ideal tri-cut.
\end{lem}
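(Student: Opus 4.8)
The plan is to mirror the structure of the torus case (Lemma \ref{lem4-1}), using the layer decomposition of the lattice $L(k,q)$ together with the structural constraint imposed by Proposition \ref{prop2-8}: an ideal tri-cut $W=\{w_1,w_2,w_3,w_4\}$ must be an independent set whose removal splits $G$ into exactly three components $G_1,G_2,G_3$, with each nontrivial $E(W,G_i-W)$ a matching of size four, and each $G_i$ containing a cycle. I would first recall that by Theorem \ref{thm3-7} the bipartite graph $G$ is isomorphic to some $K_e(k,q)$ with $k\ge 4$ even and $q\ge 2$, and that it carries the layers $L_0,\dots,L_q$, where $L_0$ and $L_q$ are glued together by the reflecting identification $v_{i,0}\sim v_{k-i,q}$. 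The whole argument is to show that no choice of four vertices $W$ can simultaneously meet all three structural requirements.

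First I would split into cases according to how $W$ meets the layers, exactly as in Lemma \ref{lem4-1}. Suppose some layer $L_i$ satisfies $|L_i\cap W|\ge 3$. Since the $K_e(k,q)$ identifications only affect $L_0$ and $L_q$, and each interior layer is a path (or cycle) connected to its neighbors, the vertices $V(L_j)-W$ of every other layer lie in one component of $G-W$; because each $E(W,G_i-W)$ is a matching, the minimum degree of each $G_i$ is at least two, which forces the fragments of $L_i-W$ to attach upward or downward and hence to merge into a common component. This contradicts the existence of three components. The complementary case is that every layer meets $W$ in at most two vertices; then each $V(L_i)-W$ induces at most two subpaths, so $G-W$ has at most two components, again a contradiction. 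The only real subtlety compared with the torus is the boundary layers $L_0,L_q$: because of the Klein-bottle identification these two are effectively a single layer, so I would treat $L_0\cup L_q$ as one layer for the counting, verifying that the reflection $v_{i,0}\sim v_{k-i,q}$ does not create a vertex of $W$ with extra neighbors that would let two components reconnect.

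The step I expect to be the main obstacle is precisely this boundary bookkeeping. On the torus the gluing $v_{i,0}\sim v_{i+q+2t,q}$ is a pure translation, so the layers stay genuinely distinct and the degree/connectivity count is clean; on the Klein bottle the orientation-reversing identification can place two vertices of $W$ on what is topologically the same cycle in a way that is not visible from the layer index alone. I would therefore argue carefully that, after the identification, each vertex still has degree three and that the matching condition $|E(W,G_i-W)|=4$ together with the girth-six (hexagonal) structure forbids any $G_i$ from being a tree or a single edge. As in the torus proof, the decisive contradiction is that an ideal tri-cut would force one component $G_i$ to contain only one black vertex (hence no cycle), violating Proposition \ref{prop2-8}. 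Once every case yields such a contradiction, the conclusion that $K_e(k,q)$ admits no ideal tri-cut follows, completing the lemma.
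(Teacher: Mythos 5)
Your proposal follows essentially the same route as the paper: the paper's proof of Lemma \ref{lem4-5} consists precisely of invoking Theorem \ref{thm3-7} to write $G\cong K_e(k,q)$ with $q\ge 2$ and then repeating verbatim the $q\ge 2$ layer-counting argument from Lemma \ref{lem4-1} (some layer meets $W$ in at least three vertices, forcing $G-W$ to have fewer than three components, or every layer meets $W$ in at most two vertices, giving at most two components), and your first main paragraph carries out exactly this, with the sensible extra care of treating the identified layers $L_0$ and $L_q$ as a single layer under the reflecting gluing. One correction to your closing paragraph: the ``single black vertex, hence no cycle'' contradiction is taken from the $q=1$ case of the torus proof, which never arises here because the construction of $K_e(k,q)$ requires $q\ge 2$; it is not the decisive step of this lemma, and the component-count contradictions you already established suffice to finish the proof.
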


\begin{proof} By Theorem \ref{thm3-1-2}, we have that $G$ is isomorphic to
$K_e(k,q)$ with $q\ge 2$. An analogous argument as the proof of Theorem
\ref{lem4-1} in case  $q\ge 2$ shows that the
lemma is true.
\end{proof}

It can be seen that the Heawood graph can not be a polyhex graph on the Klein bottle. So, by Lemma \ref{lem4-5} and Proposition \ref{prop2-8}, a Pfaffian
bipartite polyhex graph on the Klein bottle must be planar.

\begin{lem}\label{lem4-6}
A bipartite polyhex graph $G$ on the Klein bottle is planar if and
only if it is $K_e(4,2)$.
\end{lem}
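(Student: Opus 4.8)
The plan is to prove both directions, using the face-width characterization of Lemma \ref{lem3-10} to cut the problem down to a short list of candidate graphs and then testing each for planarity via Kuratowski's theorem, in the same spirit as Lemma \ref{lem4-3}.

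For the sufficiency direction I would check directly that $K_e(4,2)$ is planar. Performing the two Klein-bottle identifications collapses $L(4,2)$ to a cubic bipartite graph on eight vertices whose two colour classes are joined by all but a perfect matching of the possible edges; that is, $K_e(4,2)$ is isomorphic to $K_{4,4}$ minus a perfect matching, which is precisely the cube $Q_3$. Since $Q_3$ is planar, so is $K_e(4,2)$. Alternatively one simply exhibits a plane embedding, exactly in the style of the figures used in Lemma \ref{lem4-3}.

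For the necessity direction, suppose $G$ is planar. By Theorem \ref{thm3-8} the graph $G$ is a brace and hence $3$-connected, and the Klein bottle has positive genus, so Theorem \ref{thm2-1} forces $fw(G)\le 2$; since a polyhex is a strong embedding we also have $fw(G)\ge 2$, whence $fw(G)=2$. Applying Lemma \ref{lem3-10}, $G$ is isomorphic to some $K_e(k,q)$ with $k=4$ or $q=2$. It then remains to rule out every member of these two families other than $K_e(4,2)$, namely $K_e(4,q)$ with $q\ge 3$ and $K_e(k,2)$ with $k\ge 6$ even.

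To finish I would show each of these is non-planar by displaying a subdivision of $K_{3,3}$ and invoking Kuratowski's theorem, just as for the torus in Lemma \ref{lem4-3}. Concretely, after the two identifications one reads off the adjacencies of $K_e(4,q)$ (a circumference-four tube of height $q$ closed up with a flip) and of $K_e(k,2)$ (three layers with top and bottom glued with a flip), and in each case selects six branch vertices whose connecting paths trace out a $K_{3,3}$, presented most cleanly with a figure analogous to Figures \ref{fig4-2}--\ref{fig4-4}. The main obstacle is the bookkeeping: the flipped identification on the Klein bottle makes the adjacency pattern near the seam less symmetric than on the torus, so the branch vertices and their joining paths must be chosen so as to remain vertex-disjoint after the gluing. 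Once the two $K_{3,3}$-subdivisions are verified, both families are non-planar, and therefore the only planar bipartite polyhex graph on the Klein bottle is $K_e(4,2)$, completing the proof.
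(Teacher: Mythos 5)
Your proposal is correct and follows essentially the same route as the paper: reduce the candidates to $K_e(4,q)$ and $K_e(k,2)$ via Theorem \ref{thm2-1} and Lemma \ref{lem3-10}, kill these families with $K_{3,3}$-subdivisions and Kuratowski's theorem, and identify $K_e(4,2)$ with the cube $Q_3$. The one small difference is that the paper avoids the seam bookkeeping you anticipate for $K_e(4,q)$ by observing that $K_e(4,q)$ is isomorphic to the toroidal polyhex $T(4,q,1)$ and reusing the $K_{3,3}$-subdivision already exhibited in Lemma \ref{lem4-3} (Figure \ref{fig4-2}), so only the $K_e(k,2)$ subdivision must be constructed directly.
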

\begin{proof} Let $G$ be a planar bipartite polyhex graph on the Klein
bottle. By Theorem \ref{thm2-1} and Lemma \ref{lem3-10}, we may
assume that $G$ is isomorphic to one of $K_e(4,q)$ and $K_e(k,2)$ ($q\ge 2$ and $k\ge 4$).

\begin{figure}[!hbtp]\refstepcounter{figure}\label{fig4-5}
\begin{center}
\includegraphics[scale=1.3]{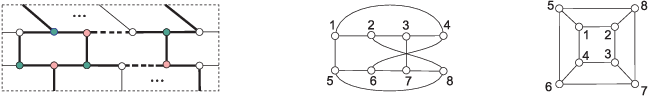}\\
{Figure \ref{fig4-5}: A $K_{3,3}$-subdivision in $K_e(k,2)$ and
$K_e(4,2)$ is isomorphic to $Q_3$.}
\end{center}
\end{figure}
Since $K_e(k,2)$ ($k\ge 6$) contains a subdivision of $K_{3,3}$ (see
Figure \ref{fig4-5}), it is non-planar. Note that $K_e(4,q)$ is
isomorphic to $T(4,q,1)$. Since $T(4,q,1)$ with $q\ge 3$ contains a
subdivision of $K_{3,3}$, it is non-planar. The polyhex graph
$K_e(4,2)$ is isomorphic to the cube and hence is planar. Hence $G$
is $K_e(4,2)$.
\end{proof}

By Proposition \ref{prop2-8}, Theorem \ref{thm2-6} and Lemmas
\ref{lem4-5} and \ref{lem4-6}, we have the following
characterization of Pfaffian bipartite polyhex graphs on the Klein
bottle.

\begin{thm}\label{thm4-7}
Let $G$ be a bipartite polyhex graphs on the Klein bottle. Then $G$
is Pfaffian if and only if it is isomorphic to the cube.
\end{thm}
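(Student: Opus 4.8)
The plan is to prove Theorem \ref{thm4-7} by combining the structural obstruction established in Lemma \ref{lem4-5} with the planarity classification of Lemma \ref{lem4-6}, using Proposition \ref{prop2-8} as the bridge between the two. The statement asserts that a bipartite polyhex graph on the Klein bottle is Pfaffian if and only if it is isomorphic to $K_e(4,2)$, so I would organize the argument as two implications.

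For the harder (forward) direction, suppose $G$ is a bipartite polyhex graph on the Klein bottle that is Pfaffian. By Theorem \ref{thm3-8}, $G$ is a brace, and it is cubic by the definition of a polyhex graph. If $G$ were non-planar and not the Heawood graph, then by Theorem \ref{thm2-6} it would be obtained from planar cubic braces by repeated tri-sum operations, and hence by Proposition \ref{prop2-8} it would contain an ideal tri-cut. But first I would observe that $G$ cannot be the Heawood graph: the Heawood graph is bipartite and embeds on the torus (it is $T(14,1,2)$), and one can note it is not a polyhex graph on the Klein bottle, or simply that it does not arise among the $K_e(k,q)$ of Theorem \ref{thm3-7}. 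With the Heawood case excluded, Lemma \ref{lem4-5} tells us that $G$ has no ideal tri-cut, so the only remaining possibility from Theorem \ref{thm2-6} is that $G$ is planar. Then Lemma \ref{lem4-6} forces $G\cong K_e(4,2)$.

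For the reverse direction, I would note that $K_e(4,2)$ is isomorphic to the cube $Q_3$ (as already recorded in the proof of Lemma \ref{lem4-6}), which is planar, and every planar graph is Pfaffian by Kasteleyn's theorem (stated in the introduction). Hence $K_e(4,2)$ is Pfaffian, completing the equivalence.

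The main obstacle I anticipate is not in the logical skeleton, which is short, but in making the exclusion of the Heawood graph airtight and in correctly invoking the chain of hypotheses for Theorem \ref{thm2-6} and Proposition \ref{prop2-8}: one must be sure that a bipartite polyhex graph on the Klein bottle genuinely falls under ``cubic Pfaffian brace different from the Heawood graph'' so that the tri-sum decomposition and the ideal-tri-cut conclusion apply. Concretely, I would verify that $G$ is a cubic brace (cubic by definition, brace by Theorem \ref{thm3-8}), check it is distinct from the Heawood graph, and only then apply Lemma \ref{lem4-5}. Everything else reduces to citing the already-proved lemmas, so once the Heawood exclusion and the brace property are pinned down the proof is essentially a one-line synthesis, exactly as the paper's remark before the theorem suggests.
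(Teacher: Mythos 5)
Your proof is correct and takes essentially the same route as the paper, which establishes Theorem \ref{thm4-7} by exactly this synthesis: Theorem \ref{thm3-8} (the graph is a brace), Proposition \ref{prop2-8} together with Lemma \ref{lem4-5} (a non-planar Pfaffian cubic brace other than the Heawood graph would need an ideal tri-cut, which no such polyhex has), and Lemma \ref{lem4-6} (planarity forces $K_e(4,2)$), with Kasteleyn's theorem giving the converse. Your explicit exclusion of the Heawood graph via the classification of Theorem \ref{thm3-7} is a detail the paper leaves implicit, and it does go through (e.g.\ every $K_e(k,q)$ with $k\ge 4$ even and $q\ge 2$ has $kq\ne 14$ vertices), so your write-up is, if anything, slightly more careful than the paper's one-line derivation.
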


Now, consider non-bipartite polyhex graphs on the Klein bottle. A
cycle $C$ of a graph $G$ on a surface is {\em 1-sided} if its
tubular neighborhood is homeomorphic to a M\"{o}bius strip, and {\em
2-sided}, otherwise. An embedding of a graph $G$ in the Klein bottle
is {\em cross-cap-odd} if every non-separating cycle $C$ of $G$ is
odd if and only if it is 1-sided.

\begin{lem}[\cite{Nor1}]\label{lem4-8}
Every graph that admits a cross-cap-odd embedding in the Klein
bottle is Pfaffian.
\end{lem}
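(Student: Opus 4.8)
The plan is to produce a Pfaffian orientation directly from the embedding by a Kasteleyn-type construction, and to control the oddly-oriented condition for central cycles through a homological parity argument in which the cross-cap-odd hypothesis supplies exactly the parity needed to defeat non-orientability.

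First I would recall the two facts that turn the problem into one about cycles. An orientation is Pfaffian precisely when every central cycle is oddly oriented, and every central cycle $C$ is even and $M$-alternating for some perfect matching $M$; in particular $C$ and $G-V(C)$ both carry perfect matchings. I would then fix an orientation $D$ of $G$ in which the boundary of \emph{every} face is oddly oriented (``clockwise odd''). Such a $D$ is sought by orienting the edges of a spanning tree arbitrarily and using the cotree edges to correct the parity of the independent faces one at a time; the only obstruction is a single global parity condition, tied to the Euler characteristic of the Klein bottle, which I would verify is satisfiable here (if a prescribed facial parity system were off by a coboundary, I would pass to the orientation differing from $D$ by that cocycle).

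The technical heart is a generalized Kasteleyn formula. For a subsurface $R$ whose boundary is a cycle $C$ (or a disjoint union of cycles), I would sum the ``clockwise odd'' relation over the faces contained in $R$: interior edges are counted once and boundary edges once, so $\sum_{f\subseteq R}\mathrm{cl}(f)\equiv \mathrm{cl}(C)+E_{\mathrm{int}}(R)\pmod 2$, and Euler's formula for $R$ converts this into $\mathrm{cl}(C)\equiv 1+V_{\mathrm{int}}(R)+\varepsilon(R)\pmod 2$, where $V_{\mathrm{int}}(R)$ counts the vertices strictly inside $R$ and $\varepsilon(R)$ is a correction coming from $\chi(R)$ together with the signature edges along which the local orientation reverses. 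When $R$ is a disk one has $\varepsilon=0$, recovering the classical statement that $C$ is oddly oriented if and only if it encloses an even number of vertices. The proof then splits according to the topological type of a central cycle $C$, which is necessarily even. If $C$ bounds a disk, then no perfect-matching edge of $G-V(C)$ crosses $C$, so the interior vertices are matched among themselves, $V_{\mathrm{int}}(R)$ is even, and $C$ is oddly oriented. The remaining possibilities on the Klein bottle are that $C$ separates into two M\"obius bands, or that $C$ is non-separating and two-sided; here the cross-cap-odd condition enters twice: it forbids $C$ from being one-sided (a one-sided non-separating cycle is odd, hence cannot be central), and, more importantly, it should fix the parity of $\mathrm{cl}$ on each $\mathbb{Z}/2$-homology class so as to cancel the correction term $\varepsilon(R)$, leaving once more $\mathrm{cl}(C)\equiv 1+V_{\mathrm{int}}(R)$ with $V_{\mathrm{int}}(R)$ even.

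The step I expect to be the main obstacle is precisely this non-orientable bookkeeping: showing that the function $C\mapsto \mathrm{cl}(C)+1$ descends to a well-defined quadratic form on $H_1(K;\mathbb{Z}/2)$ whose values on the classes realized by central cycles all vanish. Equivalently, one must verify that the signature correction $\varepsilon(R)$ in the two-M\"obius-band and two-sided non-separating cases is exactly the quantity pinned down by ``odd $\Leftrightarrow$ one-sided'' for non-separating cycles. Once this identity is in hand, every central cycle is oddly oriented, so $D$ is a Pfaffian orientation and $G$ is Pfaffian.
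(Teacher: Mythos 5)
There is a genuine gap, and it is exactly the one you flagged yourself. First, for calibration: the paper offers no proof of this lemma at all --- it is quoted from Norine's dissertation \cite{Nor1}, where the argument runs through Pfaffian labelings rather than a direct ``all faces clockwise-odd'' Kasteleyn orientation --- so your proposal must stand on its own, and as written it does not close. The region identity $\mathrm{cl}(C)\equiv 1+V_{\mathrm{int}}(R)+\varepsilon(R) \pmod 2$ that carries your whole plan presupposes a subsurface $R$ with $\partial R=C$, i.e., that $C$ is null-homologous over $\mathbb{Z}/2$. But on the Klein bottle a two-sided non-separating cycle represents the nonzero class $a_1+a_2\in H_1(K;\mathbb{Z}/2)$ (writing $K$ as the sum of two projective planes with one-sided generators $a_1,a_2$) and hence bounds no subsurface whatsoever. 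So in precisely the case you identify as ``the main obstacle,'' there is no $R$, no correction term $\varepsilon(R)$, and no identity to verify: a genuinely different mechanism is needed (cutting along $C$, passing to the orientable double cover, or pairing $C$ with a disjoint homologous cycle so that the two jointly bound), and none is supplied. Relatedly, the assertion that $C\mapsto \mathrm{cl}(C)+1$ descends to a quadratic form on $H_1(K;\mathbb{Z}/2)$ cannot be taken as a definition to be checked: $\mathrm{cl}$ is not a homology invariant (contractible cycles enclosing different numbers of vertices already have different $\mathrm{cl}$ in class $0$), so well-definedness on central cycles is itself a claim requiring the matching structure --- again the missing argument.

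A second reason the gap is substantive rather than bookkeeping: until the deferred homological step, your construction uses the cross-cap-odd hypothesis only to exclude one-sided central cycles, and nothing in it distinguishes bipartite from non-bipartite Klein-bottle polyhexes. Yet Theorem \ref{thm4-7} of this paper shows the bipartite polyhexes $K_e(k,q)$ are non-Pfaffian except for $K_e(4,2)$, although they are embedded with every face a hexagon. So if your argument could be completed without an essential use of ``odd $\Leftrightarrow$ one-sided'' in the two-M\"obius-band and non-separating two-sided cases, it would prove too much; the entire content of the lemma lives in the step you left open. Finally, the existence of the all-faces-odd orientation is also only asserted: on a surface with $\chi=0$ the solvability of that linear system depends on the parity behavior of the clash (signature) edges of the chosen local face orientations, not on ``a single global parity condition'' that can be waved through, and this too would need to be verified before the plan gets off the ground.
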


\begin{figure}[!hbtp]\refstepcounter{figure}\label{fig4-6}
\begin{center}
\includegraphics[scale=1.4]{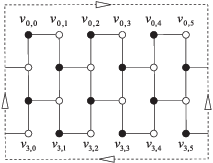}\\
{Figure \ref{fig4-6}: The tube $T_{4,6}$ obtained from $K_o(4,6)$ by
deleting edges $v_{0,i}v_{3,5-i}$ for $i\in \mathbb Z_{6}$.}
\end{center}
\end{figure}

\begin{lem}\label{lem4-9}
A non-bipartite polyhex graph $K_o(k,q)$ is a cross-cap-odd
embedding.
\end{lem}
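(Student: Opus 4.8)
The plan is to invoke Lemma \ref{lem4-8}: it suffices to show that the natural embedding of $K_o(k,q)$ in the Klein bottle is cross-cap-odd, i.e.\ that a non-separating cycle $C$ is odd precisely when it is $1$-sided. First I would set up a homology/parity bookkeeping on the toroidal-style lattice $L(k,q)$ from which $K_o(k,q)$ is built. Recall that $K_o(k,q)$ (with $q$ even) is obtained by first identifying $v_{i,0}$ with $v_{i,q}$ (a ``vertical'' gluing that is orientation-preserving) and then identifying $v_{0,j}$ with $v_{k,q-1-j}$ or $v_{k,q-2-j}$ according to the parity of $k$ (a ``horizontal'' gluing that reverses orientation, producing the cross-cap). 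The key observation is that the second identification is the only place where orientation is reversed, so a cycle is $1$-sided if and only if it crosses the horizontal seam an odd number of times. My plan is to track two $\mathbb{Z}_2$-invariants of a cycle $C$: its horizontal winding $w_h(C)\in\mathbb{Z}_2$ (the parity of how many times $C$ traverses the orientation-reversing seam) and its length parity $|C|\bmod 2$, and to prove that for every non-separating cycle the relation $|C|\equiv w_h(C)\pmod 2$ holds, with $1$-sidedness being exactly $w_h(C)=1$.

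The main computation is a parity analysis of cycle lengths against winding numbers. First I would lift $C$ to a walk in the lattice $L(k,q)$ and decompose its length into contributions from edges within a layer (``horizontal'' edges $v_{i,j}v_{i+1,j}$) and edges between consecutive layers (``vertical'' edges $v_{i,j}v_{i,j+1}$). Because of the black/white $2$-coloring ($v_{i,j}$ is black iff $i+j\equiv 0\pmod 2$) and the fact that the graph is non-bipartite precisely because of the odd horizontal shift in the seam identification ($q-1-j$ for $k$ even, $q-2-j$ for $k$ odd), I expect each traversal of the horizontal seam to flip the color-parity bookkeeping by an odd amount while every closed non-seam portion contributes evenly. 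Concretely, I would show that the total displacement of $C$ in the two lattice directions, reduced modulo the identification lattice, forces $|C|$ and $w_h(C)$ to have the same parity: a cycle that returns to its start after crossing the orientation-reversing seam an even number of times closes up in an orientable neighborhood and must have even length, while one crossing the seam an odd number of times is forced (by the odd shift in the gluing formula) to have odd length.

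Next I would match this length-parity invariant to the topological sidedness. A $1$-sided cycle is one whose tubular neighborhood is a M\"obius band; in this flat Klein-bottle model, such a neighborhood arises exactly when the cycle passes through the cross-cap an odd number of times, i.e.\ when $w_h(C)=1$. Thus $C$ is $1$-sided $\iff w_h(C)=1 \iff |C|$ is odd, which is precisely the cross-cap-odd condition for non-separating cycles. I would also separately dispose of separating cycles (contractible or bounding), where the condition is vacuous, to confirm that no non-separating cycle violates the parity relation.

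The hard part will be the careful parity argument connecting the combinatorial seam-crossing count $w_h(C)$ to the actual edge-length parity of $C$, especially handling the two cases $k$ even versus $k$ odd with their different horizontal shifts $q-1-j$ and $q-2-j$. I expect the $k$ odd case to require the most care, since there the ``vertical'' seam identification already carries an orientation-reversal combined with a shift of different parity, and one must verify that the combined effect still yields the clean equivalence ``odd length $\iff$ $1$-sided.'' A clean way to organize this is to introduce a single $\mathbb{Z}_2$-valued homomorphism on the cycle space (the mod-$2$ intersection number with the orientation-reversing seam) and show it agrees with the length-parity homomorphism on all non-separating classes; verifying agreement on a generating set of cycles—a meridian, a longitude, and one seam-crossing cycle—then finishes the lemma by linearity.
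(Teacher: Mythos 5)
Your route is, at its core, the same as the paper's: both arguments hinge on the observation that the seam edges $E_0$ created by the orientation-reversing identification ($v_{0,i}v_{k-1,q-1-i}$ for $k$ even, $v_{0,i}v_{k-1,q-2-i}$ for $k$ odd) are precisely the edges joining two vertices of the \emph{same} color in the $2$-coloring inherited from $L(k,q)$, while the tube $T_{k,q}=K_o(k,q)-E_0$ is bipartite; hence around any cycle $C$ the color-changing edges are even in number, forcing $|E(C)|\equiv\delta \pmod 2$ where $\delta=|E(C)\cap E_0|$ is your seam-crossing count $w_h(C)$, and topologically $C$ is $1$-sided iff $\delta$ is odd. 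The case analysis you flag as the hard part ($k$ even versus $k$ odd, shifts $q-1-j$ versus $q-2-j$) is in fact a two-line coordinate check: with $q$ even, the endpoint $v_{k-1,q-1-i}$ (resp.\ $v_{k-1,q-2-i}$) has color parity $\equiv i\pmod 2$ in both cases, so every $E_0$ edge is monochromatic.

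The one genuine gap is your final linearity step. Length parity and seam-crossing parity are indeed $\mathbb{Z}_2$-linear functionals on the cycle space, but a meridian, a longitude and one seam-crossing cycle generate only $H_1$ of the Klein bottle over $\mathbb{Z}_2$ (dimension $2$), not the cycle space of the graph, which has dimension $|E|-|V|+1=F+1$; your three classes only see the quotient by the span of the facial boundaries. To conclude by linearity you must also verify agreement on every facial hexagon --- trivial but necessary: each hexagon has length $6$, and since its boundary is contractible it meets the seam in an even number ($0$ or $2$) of $E_0$ edges, so both functionals vanish there. (Relatedly, reducing $1$-sidedness to the homology class needs the remark that sidedness of an embedded cycle is a $\mathbb{Z}_2$-homology invariant, via the first Stiefel--Whitney class.) The paper sidesteps all of this by arguing directly on an arbitrary cycle: it splits $C-E_0$ into $\delta$ paths of the bipartite tube, classifies them by whether their ends have equal or distinct colors, contracts to obtain an auxiliary properly $2$-colored cycle of even length $\delta_2$, and reads off $|E(C)|\equiv\delta+\delta_2\equiv\delta\pmod 2$ --- a self-contained computation needing no cycle-space machinery. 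With the hexagon check added, your version closes correctly and is, if anything, a cleaner way to organize the same parity argument.
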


\begin{proof} For a $K_o(k,q)$, let
\[
E_0 :=\left\{
 \begin{array}{ll}
 \{v_{0,i}v_{k-1,q-1-i}|i\in \mathbb Z_k\},     &\mbox{if $k\equiv 0$ (mod 2);}\\
 \{v_{0,i}v_{k-1,q-2-i}|i\in \mathbb Z_k\},   &\mbox{otherwise.}
 \end{array}
 \right.
\]
The subgraph obtained from $K_o(k,q)$ by deleting all edges in $E_0$
is a tube, denoted by $T_{k,q}$. Conversely, $K_o(k,q)$ can
generated from the tube $T_{k,q}$ by adding edges in $E_0$.

Note that $T_{k,q}$ is a bipartite graph since the proper 2-coloring
of $L(k,q)$ induces a proper 2-coloring of $T_{k,q}$. Any edge in
$E_0$ joins two vertices with the same color. Now for any cycle $C$
of $K_o(k,q)$, if $C$ is also a cycle of $T_{k,q}$, then $C$ is
2-sided and has even length. If $C$ is not a cycle of $T_{k,q}$, then
$E(C)\cap E_0\ne \emptyset$. Let $\delta:=|E(C)\cap E_0|$.
If contracting every edge in $E_0\cap E(C)$ in $C$ to a single vertex, 
we obtain a new cycle $C'$, which has  even length since the white vertices
and black vertices  alternate along any direction of $C'$. Hence
 $|E(C)|\equiv \delta$
(mod 2).

Note that the tubular neighborhood of $C$ is homeomorphic to a
M\"{o}bius strip if and only if $\delta=|E(C)\cap E_0|\equiv 1$
(mod 2). So $C$ is 1-sided if and only if $|E(C)|\equiv
\delta=|E(C)\cap E_0|\equiv 1$ (mod 2). That is, $K_o(k,q)$
is a cross-cap-odd embedding.
\end{proof}

By Lemmas \ref{lem4-8} and \ref{lem4-9}, the following result follows
immediately.

\begin{thm}\label{thm4-10}
Every non-bipartite polyhex graph on the Klein bottle is Pfaffian.
\end{thm}

In \cite{LLZ}, it has been shown that $K_o(k,q)$ is 2-extendable
if and only if $k\ge 4$ and $q\ge 5$. By a result of Lov\'asz and
Plummer (Theorem 5.5.23 on Page 206 in \cite{LP}) that a 2-extendable
graph is either bicritical or elementary bipartitie, $K_o(k,q)$ is bicritical 
and hence a brick. By Theorem~\ref{thm:4-8}, the face-width
$\text{fw}(K_o(k,q))=\min\{\lceil \frac k 2\rceil, q\}\to \infty$ as 
$\min\{k,q\}\to \infty$.
Hence we have the following remark.

\begin{rem}Theorem \ref{thm:2-3} shows that a Pfaffian
brace embedded on a surface $\Sigma$ with $g(\Sigma)>0$ has a small
face-width. But the face-width of a Pfaffian brick on sufaces with positive genus could be arbitrarily large.
\end{rem}

\begin{rem} In \cite{NT}, Norine and Thomas conjectured
that every Pfaffian cubic graph is 3-edge colorable. As shown in
\cite{Y}, every polyhex graph on the Klein bottle is Hamiltonian and
hence is 3-edge colorable. By Theorem \ref{thm4-10}, every Pfaffian
polyhex graph is 3-edge colorable. Hence  the conjecture of
Norine and Thomas is true for Pfaffian
polyhex graphs.
\end{rem}

\end{document}